\documentclass[12pt,reqno]{amsart}
\usepackage{hyperref}
\usepackage{amssymb}
\usepackage{graphicx}
\usepackage[all]{xy}
\DeclareFontFamily{U}{mathb}{\hyphenchar\font45}
\DeclareFontShape{U}{mathb}{m}{n}{
      <5> <6> <7> <8> <9> <10> gen * mathb
      <10.95> mathb10 <12> <14.4> <17.28> <20.74> <24.88> mathb12
      }{}
\DeclareSymbolFont{mathb}{U}{mathb}{m}{n}
\DeclareMathSymbol{\righttoleftarrow}{3}{mathb}{"FD}

\theoremstyle{plain}
\newtheorem{prop}{Proposition}[section]
\newtheorem{theo}[prop]{Theorem}

\newtheorem{lemm}[prop]{Lemma}
\theoremstyle{remark}
\newtheorem{rema}[prop]{Remark}
\newtheorem{ques}[prop]{Question}

\theoremstyle{definition}

\newtheorem{exam}[prop]{Example}
\numberwithin{equation}{section}

\newcommand{\actsfromleft}{\mathrel{\reflectbox{$\righttoleftarrow$}}}
\newcommand{\actsfromright}{\righttoleftarrow}

\DeclareMathOperator{\Aut}{Aut}

\DeclareMathOperator{\BirAut}{BirAut}

\DeclareMathOperator{\Burn}{Burn}

\DeclareMathOperator{\bCodim}{codim}

\DeclareMathOperator{\Pic}{Pic}

\DeclareMathOperator{\Hom}{Hom}

\DeclareMathOperator{\SL}{SL}

\DeclareMathOperator{\Ind}{Ind}

\DeclareMathOperator{\tran}{tran}

\newcommand{\ra}{\rightarrow}

\newcommand{\bC}{{\mathbb C}}
\newcommand{\bF}{{\mathbb F}}
\newcommand{\bG}{{\mathbb G}}

\newcommand{\bN}{{\mathbb N}}
\newcommand{\bP}{{\mathbb P}}
\newcommand{\bQ}{{\mathbb Q}}

\newcommand{\bZ}{{\mathbb Z}}

\newcommand{\bCA}{{\mathcal A}}
\newcommand{\bCB}{{\mathcal B}}

\newcommand{\bCO}{{\mathcal O}}

\newcommand{\cS}{{\mathcal S}}

\newcommand{\cB}{{\mathcal B}}
\newcommand{\cM}{{\mathcal M}}

\newcommand{\oA}{\overline A}

\newcommand{\oH}{\overline H}

\newcommand{\fS}{\mathfrak S}

\begin{document}
\title[Symbols and equivariant birational geometry]{Symbols and equivariant birational geometry in small dimensions}

\author{Brendan Hassett}
\address{
Brown University,
Box 1917,
151 Thayer Street,
Providence, RI 02912, USA}
\email{bhassett@math.brown.edu}

\author{Andrew Kresch}
\address{
  Institut f\"ur Mathematik,
  Universit\"at Z\"urich,
  Winterthurerstrasse 190,
  CH-8057 Z\"urich, Switzerland
}
\email{andrew.kresch@math.uzh.ch}

\author{Yuri Tschinkel}
\address{
  Courant Institute,
  251 Mercer Street,
  New York, NY 10012, USA
}

\email{tschinkel@cims.nyu.edu}
\address{Simons Foundation\\
160 Fifth Avenue\\
New York, NY 10010\\
USA}

\date{October 16, 2020}

\begin{abstract}
We discuss the equivariant Burnside group and related new invariants in equivariant birational geometry, with a special emphasis on applications in low dimensions. 
\end{abstract}

\maketitle

Let $G$ be a finite group.
Suppose that $G$ acts birationally and generically freely
on a variety over $k$, an algebraically closed field of characteristic
zero. After
resolving indeterminacy and singularities we may assume that $G$ acts
regularly on a smooth projective variety $X$.
Classifying such actions 
$$
X\actsfromright G,
$$ 
up to birational conjugation,
especially in cases where 
$X \stackrel{\sim}{\dashrightarrow} \bP^n$, is a long-standing
problem. This entails understanding realizations of $G$
in the {\em Cremona group} $\BirAut(\bP^n)$. 
There is an enormous literature on this subject;
we will summarize some key results in Section~\ref{sect:history}.

The focus of this survey is a new approach to the analysis of 
actions via new invariants 
extracted from the fixed points and stabilizer loci of $X$.
This new approach has its origin in the study of specializations
of birational type. Suppose a smooth projective variety specializes to
a reduced normal crossing divisor -- we seek to gain information
about the general fiber from combinatorial structures associated with
the special fiber \cite{NicSh,KT}. That circle of ideas, in turn, 
was inspired by motivic integration -- another
instance of this philosophy \cite{LooijengaSurvey}. Actions of finite
groups yield formally similar stratifications: we have the open subset, on which
the group acts freely, and the locus with nontrivial stabilizers. 
The combinatorics
of the resulting stratification -- along with the representations of the
stabilizers of the strata on the normal directions -- sheds light on the original
group action. In many cases, we can extract additional information from the
birational type of these strata and the action of the normalizer of
the stabilizer. 
We explain these constructions in detail, with many examples
of small dimensions, referring to the original papers 
\cite{kontsevichpestuntschinkel,kreschtschinkel}. 
Our goal is to illustrate how the layers of this new formalism reveal various levels of structure
among $G$-birational types. 
In particular, we apply these new obstructions to cyclic actions on cubic fourfolds, including rational ones, 
and produce examples of nonlinearizable actions.

Here is the roadmap of the paper: After briefly recalling several classical results, we discuss the case
of finite {\em abelian} groups $G$. 
We introduce the group of {\em equivariant birational types} 
$\mathcal B_n(G)$, as a quotient of a $\bZ$-module of certain symbols by explicit defining relations
and find a simpler presentation of these relations (Proposition~\ref{prop:81}).  
In Section~\ref{sect:surf} we explain, in numerous examples, how to compute the invariants on surfaces. 
In Section~\ref{sect:recon} we show that for $G=C_p$, of prime order $p$, 
{\em all} symbols in  $\mathcal B_n(G)$ are represented by 
smooth projective varieties with $G$-action. 
In Section~\ref{sect:refined} we introduce and study 
refined invariants of {\em abelian} actions, taking into account not only the 
representation on the tangent bundle to the fixed point strata, but also birational types of these strata. 
In Section~\ref{sect:cubic4} we exhibit cyclic actions on cubic fourfolds that 
are not equivariantly birational to linear actions; our main goal is to highlight the applications of 
the different invariants in representative examples.  
Finally, in Section~\ref{sect:nonab} we consider {\em nonabelian} groups, define the {\em equivariant Burnside group},
which encodes new obstructions to equivariant rationality, and show how these obstructions work in a striking example, 
due to Iskovskikh \cite{isk-s3}:  distinguishing two birational 
actions of $\mathfrak S_3\times C_2$ on rational surfaces.

\medskip
\noindent
\textbf{Acknowledgments:}
The first author was partially supported by NSF grant 1701659 and Simons Foundation award 546235.
The second author was partially supported by the
Swiss National Science Foundation. The third author was partially supported by NSF grant 2000099.

\section{Brief history of previous work}
\label{sect:history}
These questions were considered by Bertini, Geiser, 
De Jonqui\`eres, Kantor, etc. over a century ago but continue to inspire
new work:
\begin{itemize}
\item{Manin \cite{manin1}, \cite{manin2} studied $G$-surfaces both in the arithmetic and geometric 
context, focusing on the induced $G$-action on the geometric Picard group, and on 
cohomological invariants of that lattice;}
\item{Iskovskikh \cite{IskMin} laid the groundwork for the $G$-birational
classification of surfaces and their linkings; 
}
\item{Bayle, Beauville, Blanc, and de Fernex \cite{baylebeauville,BeaBla,deFer,BlaGGD,blancsubgroups}
classified actions of finite abelian $G$ on surfaces;}
\item{Dolgachev and Iskovskikh \cite{DolIsk} largely completed the surface case;}
\item{Bogomolov and Prokhorov \cite{BogPro,prokhorovII} considered the stable conjugacy problem
for the surface case using cohomological tools introduced by Manin;}
\item{Prokhorov, Cheltsov, Shramov, and collaborators proved numerous theorems for
threefolds -- both concerning specific groups, such as simple groups \cite{ProSimple,CheShr},
as well as general structural properties \cite{PSJordan}.}
\end{itemize}
Much of this work fits into the Minimal Model Program,
using distinguished models to reduce the classification problem 
to an analysis of automorphisms of a restricted class of 
objects, e.g., del Pezzo surfaces.
With a few exceptions -- the application of the cohomology on the
N\'eron-Severi group, by Manin and Bogomolov--Prokhorov, and the `normalized
fixed curve with action (NFCA)' invariant of Blanc \cite{blancsubgroups} 
-- invariants play a limited role.

\

A fundamental observation, recorded in \cite[App.\ A]{reichsteinyoussinessential},
is that the presence of a
point fixed by a given abelian subgroup $H$ of $G$ is a birational invariant of
a smooth projective variety $X$ with generically free $G$-action.
Furthermore, Reichstein and Youssin  showed that 
the {\em determinant} of the action of abelian stabilizers on the
normal bundle to the locus with the given stabilizer, up to sign, is also a birational invariant 
\cite{reichsteinyoussininvariant}. However, for finite groups this is only 
useful when the minimal number of
generators of the abelian group equals the dimension of the variety 
\cite[Th.~1.1]{reichsteinyoussinessential}. For cyclic groups, it is
applicable only for curves.

The invariants defined in \cite{kontsevichpestuntschinkel,Bbar,kreschtschinkel} 
record all eigenvalues for the action 
of abelian stabilizers, as well as additional information about the action on the
components of the fixed locus, and on their
function fields. These collections of 
data are turned into a $G$-birational invariant, via explicit {\em blowup relations}. The groups
receiving these invariants, the {\em equivariant Burnside groups}, have an elaborate algebraic structure. 
And they led to new results in birational geometry, some of which will be discussed below.

\section{Equivariant birational types}
\label{sect:first}

Here we restrict to the situation where $G$ is {\em abelian}
and consider only fixed points of
$X \actsfromright G$. In general, there are no such 
fixed points and we obtain no information. However, large classes of actions
do have fixed points, e.g., if $G$ is cyclic and 
$h^i(X,\bCO_X)=0$, for each $i>0$, then the Atiyah-Bott holomorphic 
Lefschetz formula \cite[Cor.~4.13]{AtiBot} yields a fixed point.
The vanishing assumption holds for rational and rationally connected $X$.
If $G$ is an abelian $p$-group ($p$ prime) acting on $X$ without
fixed points
then every Chern number of $X$ is divisible by $p$ \cite[Cor.~1.1.2]{Hau}.




To define an invariant of $X \actsfromright G$, we consider collections of weights for the action of $G$ 
in the tangent bundle at $G$-fixed points in $X$. To formalize this, 
let 
$$
A=G^\vee = \Hom(G,\bG_m)
$$
be the character group of $G$, and $n=\dim X$. Let 
$$
\cS_n(G)
$$ 
be the free abelian group on
symbols 
$$
[a_1,\ldots,a_n], \quad a_j\in A, \quad \forall j,
$$ 
subject to the conditions:

\

\begin{itemize}
\item[({\bf G})] {\bf Generation:}
$\{a_1,\ldots,a_n\}$ generate $A$, i.e., 
$$
\sum_{i=1}^n \bZ a_i = A,
$$
thus, $n$ is at least the minimal number of generators of $G$;

\

\item[({\bf S})]  {\bf Symmetry:} for each permutation $\sigma \in \fS_n$ we have
$$
[a_{\sigma(1)},\ldots,a_{\sigma(n)}] = [a_1,\ldots,a_n].
$$
\end{itemize}

\

\noindent
Let 
 \begin{equation}
 \label{eqn:Bn}
\cS_n(G) \to \bCB_n(G)
 \end{equation}
 be the quotient, by relations, for all $2\le r \le n$:

\

\begin{itemize}
\item[($\mathrm{\bf B}_r$)]  {\bf Blow-up:} 
for all 
$
[a_1,\ldots,a_r,b_1,\ldots,b_{n-r}]\in \cS_n(G)
$
one has 
\begin{align} 
\label{keyrelation0}
  [a_1,\ldots,a_r,b_1,\ldots,b_{n-r}] &= \nonumber  \\
\sum_{1\le i \le r, \, a_i\neq a_{i'} \text{ for }i'<i}
  [a_1-a_i,\ldots,a_i,\ldots,a_r-a_i,& b_1,\ldots,b_{n-r}].
\end{align}
\end{itemize}

\

\noindent
These relations reflect the transformations of weights in tangent spaces to components of the fixed locus 
upon blowing up along a $G$-stable stratum. 

\

\noindent
From the definition, we have
$$\bCB_1(G) = \begin{cases} \bZ^{\phi(N)}& \text{if } G \text{ is cyclic of order $N$, }  \\
                                 0    & \text{otherwise.} 
                   \end{cases}
                   $$

\begin{prop} \cite[Prop.~8.1]{kreschtschinkel}
\label{prop:81}
For $n\ge 2$, all relations $\mathrm{({\bf B}_{\it r})}$ are implied by relation  $\mathrm{({\bf B}_2)}$.
\end{prop}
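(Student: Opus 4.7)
The plan is to induct on $r$, with base case $r = 2$ being the hypothesis. Assume $r \geq 3$ and that $(\mathbf{B}_s)$ has been derived from $(\mathbf{B}_2)$ for all $2 \leq s < r$. Starting from $[a_1, \ldots, a_r, b_1, \ldots, b_{n-r}]$, I would first apply $(\mathbf{B}_{r-1})$ to positions $1, \ldots, r-1$, treating $a_r$ together with the $b_j$'s as the remaining $n - r + 1$ weights. This yields a sum $\sum_{i \in I,\, i \leq r-1}[\,\cdots\,]$, where $I = \{i : a_i \neq a_{i'} \text{ for all } i' < i\}$ is the indexing set of $(\mathbf{B}_r)$. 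In each resulting summand I would then apply $(\mathbf{B}_2)$ to the pair of positions $(i, r)$, where $a_i$ is the ``preserved'' weight. When $a_i \neq a_r$ this yields a ``first term'' (with $a_r - a_i$ replacing $a_r$) and a ``second term'' (with $a_i - a_r$ replacing $a_i$); when $a_i = a_r$, the relation collapses to a single term with $0$ in slot $r$.

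A direct rewrite shows that the first terms, together with any collapsed term, are exactly the summands of the right-hand side of $(\mathbf{B}_r)$ indexed by $i \leq r - 1$; moreover, in the degenerate case $a_{i_0} = a_r$ for some $i_0 < r$, the collapsed term simultaneously plays the role of the (absent) $i = r$ summand. The leftover second terms I would handle by substituting $a_j' := a_j - a_r$ for $j \leq r - 1$ and recognizing them, via the induction hypothesis, as summands of the $(\mathbf{B}_{r-1})$ expansion of $[a_1', \ldots, a_{r-1}', a_r, b_1, \ldots, b_{n-r}]$ in positions $1, \ldots, r - 1$. In the generic case $r \in I$, all $a_j' \neq 0$, and this reverse $(\mathbf{B}_{r-1})$ assembles the second terms into exactly the $i = r$ summand $[a_1 - a_r, \ldots, a_{r-1} - a_r, a_r, b_1, \ldots, b_{n-r}]$ of $(\mathbf{B}_r)$. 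In the degenerate case $a_{i_0}' = 0$, so the $i_0$-summand in the reverse expansion coincides with the full symbol $[a_1', \ldots, a_{r-1}', a_r, b_1, \ldots, b_{n-r}]$ itself, forcing the remaining second terms to telescope to zero -- exactly what is needed since $r \notin I$.

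The main obstacle, I expect, is the degenerate-case bookkeeping: tracking the indexing set $I$ through the various coincidence patterns among $a_1, \ldots, a_r$, and verifying that the collapsed forms of $(\mathbf{B}_2)$ interlock cleanly with the degenerate summands of $(\mathbf{B}_{r-1})$. The saving grace is that the translation $a_j \mapsto a_j' = a_j - a_r$ preserves the equality structure among the $a_j$, so the indexing sets for $\{a_j\}$ and $\{a_j'\}$ coincide and the two applications of $(\mathbf{B}_{r-1})$ match up cleanly.
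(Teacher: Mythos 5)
Your proposal is correct and follows essentially the same route as the paper's proof: induct on $r$, expand by $(\mathbf{B}_{r-1})$ in the first $r-1$ slots, apply $(\mathbf{B}_2)$ to each pair $(a_i,a_r)$, and reassemble the leftover second terms by a reverse application of $(\mathbf{B}_{r-1})$ after the substitution $a_j\mapsto a_j-a_r$. The only (equally valid) divergence is in the degenerate case $a_{i_0}=a_r$: the paper kills each leftover term individually, noting it contains two nonzero weights summing to $0$ and that any such symbol vanishes by $(\mathbf{B}_2)$ applied to $[0,c,\dots]$, whereas you cancel them in aggregate via the degenerate reverse $(\mathbf{B}_{r-1})$ whose $i_0$-summand reproduces the whole symbol.
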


\noindent
Thus,  $\bCB_n(G)$ is obtained by imposing the relation:

\

\begin{itemize}
\item[($\mathrm{\bf B}$)]  {\bf Blow-up:} 
for all 
$
[a_1,a_2,b_1,\ldots,b_{n-2}]\in \cS_n(G)
$
one has 
$$
[a_1,a_2,b_1,\ldots,b_{n-2}]  = 
$$
\begin{align}
\label{keyrelation}
& [a_1,a_2-a_1,b_1,\ldots,b_{n-2}] +  [a_1-a_2,a_2,b_1,\ldots,b_{n-2}],  & a_1\neq a_2, \\
& [0,a_1,b_1,\ldots,b_{n-2}], & a_1=a_2.  \nonumber 
\end{align}
\end{itemize}

\

\begin{proof}[Proof of Proposition~\ref{prop:81}]
We prove the result by induction on $r$.
We first treat the case that $a_1,a_2,\dots,a_r$ are
pairwise distinct; we drop the entries $b_1, \dots, b_{n-r}$ from the notation, as they do not 
take part in relations. 
Suppose $r\ge 3$.
Then:
\begin{align*}
[a_1,\dots,a_r]&=\text{(by $(\mathbf{B}_{r-1})$)}\,
                 [a_1,a_2-a_1,\dots,a_{r-1}-a_1,a_r]+\cdots \\ & \qquad\qquad\qquad\quad+
                     [a_1-a_{r-1},\dots,a_{r-2}-a_{r-1},a_{r-1},a_r] \\
                 &=\text{(by $(\mathbf{S})$)}\,\,\,\,\,\,\,\,\,\,
                 [a_1,a_r,a_2-a_1,\dots,a_{r-1}-a_1]+\cdots \\ & \qquad\qquad\qquad\quad+
                     [a_{r-1},a_r,a_1-a_{r-1},\dots,a_{r-2}-a_{r-1}] \\
                 &=\text{(by $(\mathbf{B}_2)+(\mathbf{S})$, applied to each term)} \\
&\!\!\!\!\!\!\!\!\!\!\!\!\!\!\![a_1,a_2-a_1,\dots,a_r-a_1]
                 +[a_1-a_r,a_2-a_1,\dots,a_{r-1}-a_1,a_r]\\
                 &\qquad+\,\,\cdots\,\,+\\
&\!\!\!\!\!\!\!\!\!\!\!\!\!\!\![a_1-a_{r-1},\dots,a_{r-2}-a_{r-1},a_{r-1},a_r-a_{r-1}]\\ &\qquad\qquad+[a_1-a_{r-1},\dots,a_{r-2}-a_{r-1},a_{r-1}-a_r,a_r].
\end{align*}
The right-hand terms, taken together, are equal by $(\mathbf{B}_{r-1})$ to
\[ [a_1-a_r,\dots,a_{r-1}-a_r,a_r], \]
which together with the left-hand terms gives us what we want.

Next we treat the more general case $a_r\notin \{a_1,\dots,a_{r-1}\}$.
In that case, for every $i$ with $a_i\in \{a_1,\dots,a_{i-1}\}$ we
omit the $i$th term on the right-hand side in the initial application
of $(\mathbf{B}_{r-1})$ and $(\mathbf{S})$ and omit the $i$th line
after the applications of $(\mathbf{B}_2)$ and $(\mathbf{S})$.
We conclude as before.

Finally we treat the case $a_r\in \{a_1,\dots,a_{r-1}\}$.
We start in the same way, by applying $(\mathbf{B}_{r-1})$ and $(\mathbf{S})$
as above.
Now, when we apply $(\mathbf{B}_2)$, we have to pay special attention to
terms with $a_i=a_r$: in the corresponding line we should leave the
left-hand term but omit the right-hand term.
Each of the remaining right-hand terms vanishes by
an application of $(\mathbf{B}_2)$ to a symbol of the form $[0,\dots]$,
i.e., the vanishing of any symbol with two nonzero weights summing to $0$.
The left-hand terms give us directly what we want.
\end{proof}

\subsection{Antisymmetry} 
\label{subsect:anti}

Let 
$$
\bCB_{n}(G) \to \bCB^{-}_{n}(G)
$$
be the projection to the quotient by the additional relation
\begin{equation}
\label{eqn:anti}
[-a_1,\ldots, a_n]=-[a_1,\ldots, a_n].
\end{equation}
In particular, symbols of the type $[0, a_2, \ldots, a_n]$ 
are in the kernel of the projection. We write 
$$
[a_1,\ldots, a_n]^-
$$
for the image of a standard generator $[a_1,\ldots, a_n]$.

\

\subsection{Multiplication and co-multiplication}
\label{subsect:mult}

Consider a short exact sequence 
$$
0\ra G'\ra G\ra G''\ra 0
$$
and its dual
$$
0\ra A''\ra A\ra A'\ra 0.
$$
The {\em multiplication} 
$$
\nabla: \bCB_{n'}(G')\otimes \bCB_{n''}(G'')\ra \bCB_{n}(G), \quad n'+n''=n, 
$$
is defined by
$$
[a_1',\ldots, a_{n'}']\otimes [a_1'',\ldots, a_{n''}''] \mapsto \sum \,\, [a_1,\ldots, a_{n'}, a_1'', \ldots a_{n''}''],
$$
summing over all lifts $a_i \in A$ of $a_i'\in A'$. It descends to a similar map on quotients by 
the relation \eqref{eqn:anti}. . 

The {\em co-multiplication} is defined only on $\bCB^{-}_{n}(G)$: 
$$
\Delta^-: \bCB^{-}_{n}(G) \ra \bCB_{n'}^-(G')\otimes \bCB_{n''}^-(G''), \quad n'+n''=n.
$$ 
On generators it takes the form
$$
[a_1,\ldots, a_n]^-\mapsto \sum \,\, [a_{I'} \, \mathrm{ mod }\,\,  A'']^-\otimes [a_{I''}]^-,
$$
where the sum is over subdivisions $\{1,\ldots, n\} = I'\sqcup I''$ of cardinality $n'$, respectively $n''$, 
such that 
\begin{itemize}
\item $a_j\in A''$, for all $j\in I''$
\item $a_j, j\in I''$, generate $A''$. 
\end{itemize}
The correctness of this definition is proved as in \cite[Prop.~11]{kontsevichpestuntschinkel}. Here are the main steps: By \cite[Prop.~8.1]{kreschtschinkel} (= Proposition~\ref{prop:81}), it suffices to check 2-term relations
$\mathrm{({\bf B}_2)}$, i.e., the image of the relation
$$
[a_1,a_2, \ldots  ] ^-= [a_1-a_2, a_2,\ldots]^- + [a_1,a_2-a_1, \ldots]^-
$$
after applying co-multiplication. 
The only interesting part is when the first two arguments are distributed 
over different factors in the definition 
of co-multiplication. 

The relation is the same relation as that for the $\mathcal M_n(G)$-groups, introduced and studied in \cite{kontsevichpestuntschinkel},
unless, $a_1=a_2$ -- recall that
$$
[a,a, \ldots]= [a,0,\ldots, ] \in \bCB_n(G).
$$
Since 
$$
0 = [a,-a,\ldots]^- = - [a,a,\ldots]^-
$$
we have
$$
[a,a,\ldots]^-= [a,0,\ldots]^- = 0.
$$
Now it suffices to repeat the argument in 
\cite[Prop.~11]{kontsevichpestuntschinkel}. Using the terminology of that paper, there are four cases, 
of which only (1) and (4) are relevant. In both cases, all terms are zero.

\

\subsection{Birational invariant}

We return to the definition of an invariant for $X \actsfromright G$,
$\dim(X)=n$, and $G$ abelian.
Consider irreducible components of the fixed locus
$$
X^G=\coprod_{\alpha} F_{\alpha},
$$ 
and write 
$$
\beta_{\alpha}=[a_{1,\alpha},\ldots,a_{n,\alpha}]
$$ 
for the unordered $n$-tuple of 
weights for the $G$-action on the tangent space $\mathcal T_{x_{\alpha}}X$, for some 
$x_{\alpha}\in F_{\alpha}$ -- this does not depend on the choice of $x_{\alpha}$. 
The number of zero weights is $\dim(F_{\alpha})$. 
We express 
\begin{equation}
\label{eqn:def-inv}\beta(X \actsfromright G )=\sum_{\alpha} \, \beta_{\alpha}
\in \bCB_n(G),
\end{equation}
and write
$$
\beta^-(X\actsfromright G) \in \bCB^{-}_{n}(G),
$$
for the image under the projection.

\begin{theo} \cite[Th.~3]{kontsevichpestuntschinkel}
\label{thm:inv}
The class 
$$
\beta(X \actsfromright G ) \in \bCB_n(G)
$$
is a $G$-birational invariant.
\end{theo}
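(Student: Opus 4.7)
The plan is to reduce to a local computation using the $G$-equivariant weak factorization theorem.

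First, I would invoke the $G$-equivariant weak factorization theorem of Abramovich--Karu--Matsuki--W\l{}odarczyk: any birational map between smooth projective varieties carrying regular generically free $G$-actions factors as a finite sequence of $G$-equivariant blowups and blowdowns with smooth $G$-invariant centers. It therefore suffices to show that $\beta(\tilde X \actsfromright G) = \beta(X \actsfromright G)$ whenever $\pi\colon \tilde X \to X$ is the blowup of $X$ along a smooth $G$-invariant closed subvariety $Z \subset X$.

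Next I would separate the contributions to $\beta$ by locality. Since $\pi$ is an isomorphism over $X \setminus Z$, every component of $X^G$ disjoint from $Z$ lifts uniquely to a component of $\tilde X^G$ with identical tangent weights, so those contributions cancel. What remains is a local analysis at each $G$-fixed point $x \in Z$. I would pick $G$-equivariant formal coordinates in which $Z$ is cut out by the first $r = \bCodim_X(Z)$ coordinates and decompose
\[T_x X = N_{Z/X,x} \oplus T_x Z,\]
with weights $(a_1,\ldots,a_r)$ on $N_{Z/X,x}$ and $(b_1,\ldots,b_{n-r})$ on $T_x Z$. A direct computation on the $r$ standard charts of $\pi$ will then show that the new fixed contributions in $\tilde X^G$ over a neighborhood of $x$ are indexed by those $i \in \{1,\ldots,r\}$ with $a_i \ne a_{i'}$ for all $i' < i$, each contributing a fixed stratum with tangent weight tuple
\[(a_1 - a_i, \ldots, a_{i-1} - a_i, a_i, a_{i+1} - a_i, \ldots, a_r - a_i, b_1, \ldots, b_{n-r}).\]
When several $a_j$ coincide, the fixed locus of $\bP(N_{Z/X,x})$ becomes positive-dimensional, and the corresponding weight symbols acquire extra zero entries from differences $a_i - a_i = 0$; the restriction $a_i \ne a_{i'}$ for $i' < i$ is precisely the convention that counts each such component once. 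Setting the original contribution $[a_1,\ldots,a_r,b_1,\ldots,b_{n-r}]$ against the sum of new contributions, the local defect is exactly the difference of the two sides of relation $(\mathbf{B}_r)$, which vanishes in $\bCB_n(G)$. By Proposition~\ref{prop:81} I could, if desired, restrict to the codimension-two case, where the computation is especially transparent, and invoke that reduction.

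The main obstacle will be the bookkeeping when the normal weights $a_i$ are not pairwise distinct and the exceptional fiber $\bP(N_{Z/X,x})$ has positive-dimensional $G$-fixed locus: one must verify that every new fixed stratum is counted with the correct multiplicity, that zero weights are inserted in the right slots, and that components of $X^G$ partially contained in $Z$ (those meeting $Z$ without being contained in it) contribute their strict transforms correctly together with the exceptional pieces. Once this chart-by-chart matching with $(\mathbf{B}_r)$ is secured, globalization is automatic because the tangent weights are locally constant on each fixed stratum and the entire calculation is functorial in the local model, yielding $\beta(\tilde X \actsfromright G) = \beta(X \actsfromright G)$ in $\bCB_n(G)$.
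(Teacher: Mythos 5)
Your proposal follows exactly the route the paper indicates: reduce via $G$-equivariant weak factorization to a single blowup along a smooth $G$-invariant center, and then check chart-by-chart that the change in the fixed-point contributions is precisely the blowup relation $(\mathbf{B}_r)$, which the group $\bCB_n(G)$ is defined to kill. This is the same approach as the paper's (which cites the proof to \cite[Th.~3]{kontsevichpestuntschinkel} and summarizes it in one sentence), and your local weight computation on $\bP(N_{Z/X,x})$, including the convention $a_i\neq a_{i'}$ for $i'<i$ counting each eigenvalue component once, is correct.
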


The proof relies on $G$-equivariant Weak Factorization, connecting $G$-birational varieties 
via blow-ups and blow-downs of smooth $G$-stable subvarieties.

\begin{prop}
\label{prop:cn}
Consider a linear, generically free, action of a cyclic group $C_N$, of order $N$, on $\bP^n$, for $n\ge 2$. 
Then 
$$
\beta^-(\bP^n\actsfromright C_N)=0 \in \bCB^{-}_{n}(C_N).
$$
\end{prop}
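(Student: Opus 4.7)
The plan is to identify $\beta^-(\bP^n \actsfromright C_N)$, after using antisymmetry and symmetry to put its terms in a common form, with the blow-up relation $(\mathbf{B}_n)$ applied to a single symbol.

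First I would reduce to standard form: twisting a linear representation by a character does not alter the projective action, so I may assume the weights on $\bC^{n+1}$ are $(0, a_1, \ldots, a_n)$, where $\{a_1, \ldots, a_n\}$ generate $A = C_N^\vee$. The fixed components are the projectivizations of the eigenspaces, one for each distinct weight value $c$ of multiplicity $m$, isomorphic to $\bP^{m-1}$. At a point of such a component the tangent space has $m-1$ zero weights, so if $m \ge 2$ the associated symbol contains a $0$ entry and vanishes in $\bCB_n^-(C_N)$. Hence only the isolated fixed points $P_i$, those where the weight $a_i$ has multiplicity one in the full list, contribute to $\beta^-$.

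Next I would write out these contributions. The point $P_0$, when isolated, contributes $[a_1, \ldots, a_n]^-$. For $j \ge 1$ with $P_j$ isolated, the tangent weights are $(-a_j, a_1 - a_j, \ldots, a_n - a_j)$ with the $j$th position omitted; applying antisymmetry to flip the sign of $-a_j$ and the symmetry relation $(\mathbf{S})$ to move $a_j$ into position $j$, the contribution becomes $-[a_1 - a_j, \ldots, a_j, \ldots, a_n - a_j]^-$. Thus
\[
\beta^- = \varepsilon\,[a_1, \ldots, a_n]^- \;-\; \sum_{\substack{j \ge 1 \\ a_j \ne 0,\ a_j \text{ of mult.\ one}}} [a_1 - a_j, \ldots, a_j, \ldots, a_n - a_j]^-,
\]
with $\varepsilon = 1$ if no $a_j = 0$ and $\varepsilon = 0$ otherwise.

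The decisive step is to apply relation $(\mathbf{B}_n)$ to $[a_1, \ldots, a_n]$. By Proposition~\ref{prop:81} this is valid, and it expresses $[a_1, \ldots, a_n]^-$ as a sum of symbols $[a_1 - a_i, \ldots, a_i, \ldots, a_n - a_i]^-$, one per first occurrence of a value among the $a_i$. In $\bCB_n^-(C_N)$, terms with $a_i$ of multiplicity $\ge 2$, or with $a_i = 0$, contain a zero entry and vanish, leaving precisely the terms indexing the sum above. Therefore $[a_1, \ldots, a_n]^- = \sum [a_1 - a_j, \ldots, a_j, \ldots, a_n - a_j]^-$ in $\bCB_n^-$, and so $\beta^- = (\varepsilon - 1)\,[a_1, \ldots, a_n]^-$. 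When $\varepsilon = 1$ this is zero; when $\varepsilon = 0$ some $a_k$ vanishes, so $[a_1, \ldots, a_n]^-$ itself has a zero entry and is zero in $\bCB_n^-$. The main obstacle is the bookkeeping: verifying the bijection between multiplicity-one eigenspaces and surviving terms in the $(\mathbf{B}_n)$-expansion under the symmetry rearrangement, and checking that every degenerate configuration (repeated weights, a weight equal to $0$) collapses correctly via the vanishing of symbols with zero entries.
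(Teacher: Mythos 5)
Your argument is correct, but it takes a genuinely different route from the paper's. The paper first invokes the theorem of Reichstein--Youssin that all generically free linear actions of $C_N$ on $\bP^n$ are equivariantly birational to one another, which (together with the invariance statement of Theorem~\ref{thm:inv}) reduces the claim to a single action with weights $(1,0,\ldots,0)$; there the fixed locus is a hyperplane plus a point, the point's symbol $[-1,\ldots,-1]$ collapses to $[-1,0,\ldots,0]$ by repeated use of \eqref{keyrelation}, and antisymmetry kills $[1,0,\ldots,0]^-+[-1,0,\ldots,0]^-$. You instead compute $\beta^-$ for an \emph{arbitrary} diagonal action with weights $(0,a_1,\ldots,a_n)$ and recognize the answer as $\varepsilon\,[a_1,\ldots,a_n]^-$ minus precisely the surviving terms of the $(\mathbf{B}_n)$-expansion of $[a_1,\ldots,a_n]^-$, using that symbols containing a zero entry die in $\bCB_n^-$ (a fact the paper records after \eqref{eqn:anti} and justifies in Section~\ref{subsect:mult} via $[a,-a,\ldots]=0$). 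The index-matching you flag as the main bookkeeping worry does work out: a term of $(\mathbf{B}_n)$ survives in $\bCB_n^-$ exactly when $a_i\neq 0$ and $a_i$ has multiplicity one, which is exactly the condition for $P_i$ to be an isolated fixed point. What your route buys is self-containedness: it needs neither the linearizability theorem nor even the birational invariance of $\beta$, since the computation is performed on $\bP^n$ itself, and it makes transparent that the blow-up relations are engineered so that linear actions become trivial in $\bCB_n^-$. What the paper's route buys is brevity, at the cost of leaning on two substantial external inputs.
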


\begin{proof}
We know that all such actions are equivariantly birational, see, e.g., \cite[Th. 7.1]{reichsteinyoussininvariant}. 
Thus it suffices to consider one such action. Take an action with weights $(1,0,\ldots, 0)$. It fixes a hyperplane, and a point, the corresponding class is
$$
[1,0,\ldots, 0] + [-1, -1, \ldots, -1]=[1,0,\ldots, 0] + [-1,0,\ldots, 0],    
$$
here, we repeatedly used relation \eqref{keyrelation} to transform the second term. 
\end{proof}

\begin{rema} \label{rema:lineartorsion}
We shall show in Section~\ref{subsect:torsion} that
$[1,0]+[-1,0]$ is torsion in $\bCB_2(C_N)$.
See \cite[Prop.~7 and Lem.~32]{kontsevichpestuntschinkel} for the case 
of prime $N$.
The element is nontrivial when
$$
N=7, 9, 10, 11, 13, 14, 15, 17, \ldots.
$$
\end{rema}

\section{Computation of invariants on surfaces}
\label{sect:surf}

\subsection{Sample computations of $\bCB_2(C_p)$}
This group is generated by symbols $[a_1,a_2]$, where
$$
a_1,a_2 \in \Hom(C_p,\bG_m)\cong \bZ/p\bZ
$$ 
are not both trivial.
To simplify notation, we write $a_i=0,1,\ldots,p-1$.
Note that 
$$
[a,a]=[0,a]\quad \text{ and } \quad [a_1,a_2]=[a_2,a_1]
$$
so that the 
symbols 
$$[a_1,a_2]: \quad \quad 0<a_1\le a_2 < p$$
generate $\bCB_2(C_p)$.
The other relation -- for $a_1<a_2$ -- takes the form
$$
[a_1,a_2]=[a_1,a_2-a_1]+[p+a_1-a_2,a_2].
$$

\

\begin{itemize}
\item[($p=2$)]
We obtain relations
$$[1,1]=[0,1], \quad  [0,1]=[0,1]+[1,1]$$
forcing $\bCB_2(C_2)=0$.
\end{itemize}

\

\begin{itemize}
\item[($p=3$)] 
We obtain relations
\begin{align*}
[0,1] &= [0,1] + [2,1] = [0,1] + [1,2] \\
[0,2] &= [0,2] + [1,2] \\
[1,1] &= [0,1] \\
[1,2] &= [1,1] + [2,2] \\
[2,2] &= [0,2]
\end{align*}
forcing 
$$
[1,2]=0\quad \text{  and } \quad [0,1]=[1,1]=-[2,2]=-[0,2].
$$ 
We conclude that $\bCB_2(C_3)\cong \bZ$.
\end{itemize}

For instance, consider the standard diagonal action on $\bP^2$
$$ (x:y:z) \mapsto (x:\zeta_3 y:\zeta_3^2 z)$$
with the convention that $\zeta_N$ is a primitive $N$th root of unity.
This has invariant 
$$\beta(\bP^2 \actsfromright C_3) = [1,2]+[1,2]+[1,2]=0.$$

On the other hand, the action of $C_3$ on the cubic surface 
$$X=\{w^3=f_3(x,y,z)\}, \quad (w:x:y:z)\mapsto (\zeta_3 w:x:y:z)$$
fixes the cubic curve $\{ w=0\}$ and we find that 
$$\beta(X\actsfromright C_3)=[2,0]\neq 0,$$
thus $X$ is not $G$-equivariantly birational to $\bP^2$, with linear action. 
Note that $\beta$ does not allow to distinguish among these cubic
surfaces.
Nor does it distinguish the cubic surfaces from the
degree one del Pezzo surfaces with $C_3$-action
$$Y=\{w^2=z^3 + f_6(x,y) \} \subset \bP(3,1,1,2),
$$
given by
$$
(w:x:y:z)\mapsto (w:x:y:\zeta_3 z).
$$
We shall see in Section~\ref{sect:refined} that
taking into account the fixed locus
$[F_{\alpha}]$ gives a complete invariant. 

\

\begin{itemize}
\item[($p=5$)]
We have relations
\begin{align*}
[0,1] &= [0,1] + [4,1] \\
[0,2] &= [0,2] + [3,2] \\
[0,3] &= [0,3] + [2,3] \\
[0,4] &= [0,4] + [1,4] 
\end{align*}
forcing $[1,4]=[2,3]=0$. We also have
\begin{align*}
[1,2] &= [1,1] + [4,2] \\
[1,3] &= [1,2] + [3,3] \\
[1,4] &= [1,3] + [2,4] \\
[2,3] &= [2,1] + [4,3] \\
[2,4] &= [2,2] + [3,4] \\
[3,4] &= [3,1] + [4,4] 
\end{align*}
which shows that $\bCB_2(C_5)$ is freely generated by 
$$
\beta_1=[1,1]\quad \text{  and }\quad \beta_2=[1,2]
$$
with 
\begin{align*}
[1,3]=\beta_1-\beta_2, \quad  [2,2]=2\beta_2-\beta_1, \quad [2,4]=\beta_2-\beta_1,\\
 [3,3]=\beta_1-2\beta_2, \quad  [3,4]=-\beta_2, \quad [4,4]=-\beta_1.
\end{align*}
\end{itemize}

For example, $\overline{M}_{0,5}$, a del Pezzo surface of degree 5, has a natural action of $C_5$ by
permuting the coordinates with fixed points given by the roots
of $z^5-1$ and $z^5+1$. We compute $\beta(\overline{M}_{0,5} \actsfromright C_5)$:
$$
\beta_1 +\beta_2+(\beta_1-\beta_2)+(2\beta_2-\beta_1)+
(\beta_2-\beta_1)+(\beta_1-2\beta_2)+(-\beta_2)+(-\beta_1) =0.$$
Indeed, this action is in fact conjugate \cite{BeaBla} to a linear action on
$\bP^2$
$$ (x:y:z) \mapsto (x:\zeta_5 y:\zeta_5^4 z).$$
However, there is also a nontrivial action of $C_5$ on a del Pezzo surface of degree 1:
$$X=\{w^2=z^3+\lambda x^4z + x(\mu x^5 + y^5) \} \subset \bP(3,1,1,2),
$$
$$
(w:x:y:z) \mapsto (w:x:\zeta_5 y:z),
$$
with fixed locus an elliptic curve and invariant
$\beta(X \actsfromright C_5)=[4,0].$

\

Let us compute an example of nonprime order. The group $\bCB_2(C_4)$ has
generators 
$$[0,1],[0,3],[1,1],[1,2],[1,3],[2,3],[3,3]$$
and relations
\begin{align*}
[0,1] &= [0,1]+[3,1] \\
[0,3] &= [0,3]+[1,3] \\
[1,1] &= [0,1] \\
[1,2] &= [1,1] + [3,2] \\
[1,3] &= [1,2] + [2,3] \\
[2,3] &= [2,1] + [3,3] \\
[3,3] &= [0,3]
\end{align*}
whence 
$$[1,3]=0, \quad \beta_1:=[1,2]=-[2,3], \quad  [0,3]=[3,3]=2[2,3]=-2\beta_1, 
$$
$$
[0,1]=[1,1]=2[1,2]=2\beta_1
$$
and $\bCB_2(C_4)\cong \bZ$.

\

Consider the del Pezzo surface of degree 1, given by
$$
X=\{ w^2 = z^3 + zL_2(x^2,y^2) + xy M_2(x^2,y^2) \} \subset \bP(3,1,1,2),
$$
where $L_2$ and $M_2$ are homogeneous of degree two. It admits a $C_4$-action
$$
(w:x:y:z) \mapsto (iw:x:-y:-z)
$$
with a unique fixed point $(0:1:0:0)$. The weights on the tangent bundle
are $[2,3]$ whence
$$
\beta(X\actsfromright C_4)\neq 0.
$$
Observe that $X^{C_2}$ is a curve of genus four.

\

See \cite[\S 10.1]{blancthesis} for
a classification of automorphisms of large finite order $N$
on del Pezzo surfaces:
\begin{enumerate}
\item{The surface 
$$X=\{w^2=z^3+x(x^5+y^5) \} \subset \bP(3,1,1,2)$$
admits an automorphism of order $30$
$$(w:x:y:z) \mapsto (-w:x:\zeta_5 y: \zeta_3 z)$$
with fixed point $(0:0:1:0)$ and with weights $[3,2]$, 
thus
$$
\beta(X \actsfromright C_{30})=[3,2]\neq 0 \in \bCB_2(C_{30})\otimes \bQ,
$$ 
by a computation in {\tt Sage} \cite{sagemath}. This implies 
(see Remark~\ref{rema:lineartorsion}) that
this action is not conjugate to a linear action. 
Note that $\dim \bCB_2(C_{30})\otimes \bQ = 33.$ 
}
\item{The surface
$$X=\{w^2=z^3+xy(x^4+y^4) \} \subset \bP(3,1,1,2)$$
admits an automorphism of order $24$
$$(w:x:y:z)\mapsto (\zeta_8w : x : iy : -i\zeta_3 z).$$
The fixed point is $(0:1:0:0)$ with symbol $[21,22]$.
Computing via {\tt Sage} we find
$$\beta(X\actsfromright C_{24}) \neq 0 \in \bCB_2(C_{24})\otimes \bQ.$$
Here $\dim \bCB_2(C_{24})\otimes \bQ = 23.$
}
\end{enumerate}

\

There are good reasons why we obtain nonvanishing invariants only
when a curve is fixed: If $G=C_N$ is a cyclic group acting generically
freely
on a complex projective smooth rational surface $X\actsfromright G$ 
then the following are equivalent \cite[Th.~4]{blancthesis}:
\begin{itemize}
\item{No $g \neq 1 \in G$ fixes a curve in $X$ of positive genus.}
\item{The subgroup $G$ is conjugate to a subgroup of $\Aut(\bP^2)$.}
\end{itemize}
Even more is true: if $G$ contains an element fixing a curve of positive genus then 
$X$ is not even stably $G$-birational to projective space with a linear $G$-action, indeed, in this case
$\mathrm H^1(G,\Pic(X))\neq 0$  \cite{BogPro}.

\subsection{Examples for noncyclic groups}
If $G$ is a noncyclic abelian group then $\cB_1(G)=0$ by
definition but there are actions on curves:
\begin{exam} \label{exam:Klein1}
Consider the action of $C_2\times C_2$ on $\bP^1$ by 
$$g_1:=\left( \begin{matrix} 0 & -1 \\
		        1 & 0 \end{matrix} \right), \quad
g_2:=\left( \begin{matrix} 1 & 0 \\
    		      0 & -1 \end{matrix} \right),$$
with the elements
$$g_1^2=g_1g_2g_1^{-1}g_2^{-1} = \left( \begin{matrix} -1 & 0 \\	
					         0 & -1 \end{matrix} \right)$$
acting trivially. Thus we obtain 
$$\bP^1 \actsfromright (C_2\times C_2).$$
The group has no fixed points whence
$\beta(\bP^1 \actsfromright (C_2\times C_2))=0.$
The cyclic subgroups do have fixed points
$$(\bP^1)^{\left<g_1\right>}=\{ (1:\pm i) \}, 
(\bP^1)^{\left<g_2\right>}=\{ (1:0),(0:1) \},  
(\bP^1)^{\left<g_1g_2\right>}=\{ (1:\pm 1) \}.$$
\end{exam}
We return to this in Example~\ref{exam:Klein2}.
In Section~\ref{subsect:burndef}, we will discuss how to incorporate information from {\em all} points with nontrivial stabilizer.

We compute $\bCB_2(C_2\times C_2)$. Writing
$$(C_2 \times C_2)^{\vee} = \{0,\chi_1,\chi_2,\chi_1+\chi_2\},$$
the only admissible symbols are
$$[\chi_1,\chi_2],[\chi_1,\chi_1+\chi_2],[\chi_1+\chi_2,\chi_2]$$
with relations:
\begin{align*}
[\chi_1,\chi_2] &= [\chi_1,\chi_1+ \chi_2] + [\chi_1+\chi_2,\chi_2] \\
[\chi_1,\chi_1+\chi_2] &= [\chi_1,\chi_2] + [\chi_1+\chi_2,\chi_2] \\
[\chi_1+\chi_2,\chi_2] &= [\chi_1,\chi_1+\chi_2] + [\chi_1,\chi_2].
\end{align*}
Thus we obtain the Klein four group again
$$\bCB_2(C_2\times C_2)\cong C_2 \times C_2.$$

\

The classification of finite abelian noncyclic
actions on rational surfaces
may be found in \cite[\S 10.2]{blancthesis}. Examples of actions of
$C_2\times C_2$ on rational surfaces include:

\

\noindent
On $\bP^1 \times \bP^1$:
\begin{itemize}
\item[(1)] 
$(x,y) \mapsto (\pm x^{\pm 1},y)$, without fixed points;
\item[(2)]
$(x,y) \mapsto (\pm x, \pm y)$, 
with fixed points $(0,0),(0,\infty),(\infty,0),(\infty,\infty)$,
thus
$$
\beta(\bP^1 \times \bP^1 \actsfromright C_2 \times C_2)=4[\chi_1,\chi_2]=0;
$$
\item[(3)]
the diagonal action
$$(x,y) \mapsto (-x,-y), (x,y)\mapsto(x^{-1},y^{-1})$$
has no fixed points so the symbol sum is empty;
\end{itemize}

\

\noindent
On conic fibrations over $\bP^1$:
\begin{itemize}
\item[(4)]
$(x_1:x_2)\times (y_1:y_2) \mapsto$
$$
\begin{array}{rl}
&  (x_1:-x_2)\times (y_1:y_2), \quad \text{ respectively, } \\
& (x_1:x_2) \times (y_2(x_1-bx_2)(x_1+bx_2):y_1(x_1-ax_2)(x_1+ax_2),
\end{array}
$$
which also has four fixed points
with the same symbol;
\end{itemize}

\

\noindent
On a degree two del Pezzo surface:
\begin{itemize}
\item[(5)] 
$
(w:x:y:z) \mapsto (\pm w: x: y: \pm z)
$ on
$$
\{w^2=L_4(x,y)+z^2L_2(x,y)+z^4 \},
$$
with the involutions fixing a genus three curve and an elliptic curve
meeting in four points whence $\beta(X\actsfromright C_2\times C_2)=0$;
\end{itemize}
\

\noindent
On a degree one del Pezzo surface:
\begin{itemize}
\item[(6)]
$(w:x:y:z) \mapsto (\pm w: x: \pm y: z)$
on 
$$
\{w^2=z^3+zL_2(x^2,y^2)+L_3(x^2,y^2)\},
$$ 
with the involutions fixing a genus four curve and a genus two curve
meeting in six points whence $\beta(X\actsfromright C_2\times C_2)=0$.
\end{itemize}

None of these
actions are distinguished by $\bCB_2(C_2\times C_2)$.
Case~(1) is stably equivalent to the action on $\bP^1$ described
above. The second action is equivalent to a linear action on $\bP^2$
-- project from one of the fixed points. 
We return to these examples in
Section~\ref{subsect:examples}.

\subsection{Linear actions yield torsion classes}
\label{subsect:torsion}
Let $C_N$ act linearly and generically freely on $\bP^n$.
We saw in Proposition~\ref{prop:cn} that
$$\beta(\bP^n \actsfromright C_N) = [a,0,\ldots,0]+[-a,0,\ldots,0]$$ 
for some $a$ relatively prime to $N$.
Remark~\ref{rema:lineartorsion} pointed out this is torsion for $n\ge 2$;
we offer a proof now:
\begin{prop}
For $N\in \bN$, an $a$ with $\gcd(a,N)=1$, and $n\ge 2$ the element
$$[a,0, \ldots,0]+[-a,0,\ldots,0] \in \bCB_n(C_N)$$
is torsion.
\end{prop}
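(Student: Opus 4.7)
The plan is to reduce to $n=2$ and then extract torsion via a functional-equation argument for the symmetrized symbols $f(x,y) := [x,y]+[-x,-y]$.

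For the reduction, I would invoke the multiplication map of Section~\ref{subsect:mult} associated to the split extension $0\to C_N\to C_N\to 1\to 0$. Since $A''=0$ the lifts are unique and the resulting $\bZ$-linear homomorphism
\[ \nabla : \bCB_2(C_N)\otimes \bCB_{n-2}(\{1\}) \to \bCB_n(C_N), \qquad [a,b]\otimes 1\mapsto [a,b,0,\ldots,0], \]
is well-defined, because each blow-up relation $(\mathbf{B})$ in $\bCB_2(C_N)$ maps to the same relation $(\mathbf{B})$ applied to the first two entries of the image. Consequently, if $m([a,0]+[-a,0])=0$ in $\bCB_2(C_N)$ then $m([a,0,\ldots,0]+[-a,0,\ldots,0])=0$ in $\bCB_n(C_N)$, reducing us to $n=2$.

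In the case $n=2$, I would set $Z := f(a,0) = [a,0]+[-a,0]$ and establish that $f$ satisfies the functional equation
\[ f(x,y)=f(x,y-x)+f(x-y,y)\qquad \text{for }x\neq y \text{ in } \bZ/N\bZ, \]
together with the symmetries $f(x,y)=f(y,x)=f(-x,-y)$, the degenerate case $f(x,x)=f(0,x)=Z$ (using that $f(a,0)$ is independent of the chosen generator $a$, which follows from Theorem~\ref{thm:inv} applied to two linear $C_N$-actions on $\bP^2$ with weights $(a,0,0)$ and $(b,0,0)$), and $f(x,-x)=0$ (from $(\mathbf{B})$ applied to $[x,0]$). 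A key identity is $f(a,2a)=Z$: the linear action on $\bP^2$ with weights $(0,a,2a)$ gives $\beta=[a,2a]+[-a,a]+[-2a,-a]=Z$, and since $[-a,a]=0$ and $[-2a,-a]=[-a,-2a]$ by symmetry, this collapses to $f(a,2a)=Z$. Combined with the functional equation this yields $f(-a,2a)=0$.

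The remaining and hardest step is to close up the Euclidean-type moves of the functional equation into a finite cycle producing an explicit relation of the form $mZ=0$. Starting from $f(a,0)=Z$ and iterating, one traces a chain of pairs $(x,y)\to(x,y-x)\to\cdots$ under which the accumulated 2-torsion and symmetry identifications eventually return the pair to an $f$-equivalent starting pair, yielding $mZ\equiv 0$ modulo 2-torsion and hence $2mZ=0$. For $N=7$, tracing the cycle through the pair $(3,5)$, which is $f$-equivalent to $(2,4)$ under $\iota$ and symmetry, produces $4Z=0$. The main obstacle will be formulating this closure uniformly in $N$ — either by exhibiting such an explicit cycle for every $N$, or, more cleanly, by proving that $Z$ vanishes rationally in $\bCB_2(C_N)\otimes\bQ$ using the family of identities $[b-a,c-a]+[a-b,c-b]+[a-c,b-c]=Z$ that arise from equating the invariants of all generically free linear $C_N$-actions on $\bP^2$ with weights $(a,b,c)$.
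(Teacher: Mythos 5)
Your reduction to $n=2$ and the identities you derive for $f(x,y)=[x,y]+[-x,-y]$ (the functional equation, $f(x,-x)=0$, $f(a,2a)=Z$, the independence of $Z$ from the choice of generator $a$) are all correct. But the proof is not complete: the step you yourself call ``the remaining and hardest step'' --- closing the Euclidean-type moves into a finite cycle that yields $mZ=0$ --- is exactly where the content of the proposition lies, and you have not supplied it. The $N=7$ computation is asserted without the cycle being exhibited (and without defining $\iota$), and no construction uniform in $N$ is given.

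Your suggested ``cleaner'' alternative also has a concrete defect. The identity $[b-a,c-a]+[a-b,c-b]+[a-c,b-c]=Z$ is the fixed-point computation of $\beta$ for the linear action with weights $(a,b,c)$ only when the three weights are pairwise distinct; when, say, $b=a$ the fixed locus is a point plus a line and the correct invariant is $[v,0]+[0,-v]=Z$ (with $v=c-a$), whereas your formula reads $2[0,v]+[0,-v]=Z+[v,0]$, so taking it at face value would force $[v,0]=0$, which is false in general (e.g.\ $[1,0]$ generates $\bCB_2(C_3)\cong\bZ$). Any summation-over-all-pairs argument must therefore treat the degenerate pairs separately. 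The paper's proof does precisely this: it passes to $\bCB_2(C_N)\otimes\bQ$ and renormalizes $\langle a,0\rangle:=\tfrac12[a,0]$ so that the blow-up relation $\langle a,b\rangle=\langle a,b-a\rangle+\langle a-b,b\rangle$ holds uniformly even in the degenerate cases, sets $\delta(a,b)=\langle a,b\rangle+\langle -a,b\rangle+\langle a,-b\rangle+\langle -a,-b\rangle$, checks its $\SL_2(\bZ/N\bZ)$-invariance, and then sums over all pairs with $\gcd(a,b,N)=1$: the sum $S=4\sum\langle a,b\rangle$ satisfies $S=2S$ because each of the two terms of the blow-up relation is indexed by a bijection of the set of pairs, whence $S=0$ rationally and each $\delta(a,b)$ --- in particular $\delta(a,0)=[a,0]+[-a,0]$ --- is torsion. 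That global averaging step, together with the half-weight normalization that makes it run smoothly over degenerate symbols, is the missing idea in your proposal.
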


\begin{proof}
It suffices to consider $n=2$; the argument we present goes through
without changes for $n>2$. 

We will work in $\bCB_2(C_N)\otimes \bQ$. We use 
generators for this 
space arising from the alternative symbol formalism $\cM_2(C_N)$ introduced 
in \cite{kontsevichpestuntschinkel} with the property
that \cite[Prop.~7]{kontsevichpestuntschinkel}
$$
\bCB_2(C_N)\otimes \bQ = \cM_2(C_N)\otimes \bQ.
$$

For $a,b \in \Hom(C_N,\bG_m)$ generating the group we set
$$\left<a,b\right>= \begin{cases} [a,b] & \text{ if } a, b \neq 0 \\
				  \frac{1}{2}[a,0] & \text{ if } a\neq 0, b=0.
		    \end{cases}
$$
The advantage of these generators is that the relations are uniformly
$$\left<a,b\right> = \left<a,b-a\right> + \left<a-b,b\right> \quad (\mathbf{B}),$$
even when $a=b$.

We follow the proof of \cite[Th.~14]{kontsevichpestuntschinkel}. 
For all $a,b$ with $\gcd(a,b,N)=1$ we write
$$
\delta(a,b):=\left<a,b\right> + \left<-a,b\right>  + \left<a,-b\right> + \left<-a,-b\right>. 
$$
We claim this is zero in $\bCB_2(C_N)\otimes \bQ$.

First, we check that $\delta(a,b)$ is invariant under 
$\SL_2(\bZ/N\bZ)$.  
This has generators
$$
\left(\begin{matrix} 0 & -1 \\  1 & 0 \end{matrix} \right), \quad
\left(\begin{matrix} 1 & -1 \\  0 & 1 \end{matrix} \right).
$$
and $\delta(a,b)=\delta(-b,a)$ by the symmetry of the underlying symbols.
We also have
\begin{align*}
\delta(a,b-a)=& \left<a,b-a\right> + \left<-a,b-a\right> +
			 \left<a,a-b\right> + \left<-a,a-b\right> \\
	      & \text{ applying $\mathbf{B}$ to second and third terms above } \\
	     =& \left<a,b-a\right> + \left<-a,b\right> + \left<-b,b-a\right> \\
              & +\left<a,-b\right> + \left<b,a-b\right> + \left<-a,a-b\right> \\ 		
              & \text{ applying $\mathbf{B}$ to get first and four terms below} \\
     =& \left<a,b\right> + \left<-a,b\right> + \left<a,-b\right> + \left<-a,-b\right> \\
     =& \delta(a,b).
\end{align*}
		
Average $\delta(a,b)$ over 
all pairs $a,b$ with $\gcd(a,b,N)=1$ to obtain
$$S:=\sum_{a,b} \delta(a,b) = 4\sum_{a,b} \left<a,b\right>.$$
Applying the blowup
relation ($\mathbf{B}$) to all terms one finds
$$S=2S,$$
which implies that $S=0 \in \bCB_2(C_N)\otimes \bQ$.

We may regard $\delta(a,b)$ and $S$ as elements of $\bCB_2(C_N)$.
It follows that $\delta(a,b)$ is torsion in $\bCB_2(C_N)$,
annihilated by the number of summands in $S$.
Substituting $b=0$, we obtain that 
$$
\delta(a,0)=[a,0]+[-a,0]=0 \in \bCB_2(C_N)\otimes \bQ.
$$
\end{proof}
The invariance of $\delta(a,0)$ shows that $[a,0]+[-a,0]$ 
is independent of the 
choice of $a$ relatively prime to $N$.

\subsection{Algebraic structure in dimension 2}
\label{subsect:str-2}

For reference, we tabulate
$$
\dim \bCB_2(G)\otimes \bQ,
$$ 
for $G=C_N$ and small values of $N$:
$$
\begin{tabular}{r|ccccccccccccccc}
{\it N}           & 2 & 3 & 4 & 5 & 6 & 7 & 8 & 9 & 10 & 11 & 12 &13 &14 & 15 & 16 \\
\hline 
& 0 & 1 & 1 & 2 & 2 & 3 & 3 & 5 & 4 & 6 & 7 & 8 & 7 & 13 & 10 \\
\end{tabular}
$$
For primes $p\ge 5$  there is a closed formula
\cite[\S 11]{kontsevichpestuntschinkel}:
\begin{equation}
\label{label:dimb}
\dim \bCB_2(C_p) \otimes \bQ = \frac{p^2 -1}{24} + 1 = \frac{(p-5)(p-7)}{24} + \frac{p-1}{2},
\end{equation}
which strongly suggested a connection with the modular curve $X_1(p)$! 
We also have
\begin{equation}
\label{label:dimbm}
\dim \bCB_2^-(C_p) \otimes \bQ = \frac{(p-5)(p-7)}{24},
\end{equation}
and, by \cite[Prop.~30]{kontsevichpestuntschinkel}, 
$$
\bCB_1^-(C_p)\otimes \bQ= \mathrm{Ker}(\bCB_2(C_p) \to \bCB_2^-(C_p))\otimes \bQ. 
$$
Computations in noncyclic cases have been performed by Zhijia Zhang\footnote{see \url{https://cims.nyu.edu/~zz1753/ebgms/ebgms.html}}; we summarize the results: for primes $p\ge 5$ one has
$$
\dim \bCB_2(C_p\times C_p) \otimes \bQ    = \frac{(p-1)(p^3+6p^2-p+6)}{24},
$$
$$
\dim \bCB_2^-(C_p\times C_p) \otimes \bQ =\frac{(p-1)(p^3-p+12)}{24}
$$
For $G=C_{N_1}\times C_{N_2}$ and small values of $N_1,N_2$, we have:

$$
\begin{tabular}{r|ccccccccccccccc}
${\it N}_1$           & 2 & 2 & 2 & 2  & 2 & 2  & 3 & 3& 3   & 3 & 4 & 4 & 4            & 5 & 6 \\
\hline
${\it N}_2$           & 2 & 4 & 6 & 8  & 10 & 16 & 3 & 6 & 9  & 27 & 8 & 16 & 32      & 25 & 36  \\
\hline 
$d_{\bQ}$            & 0 & 2 & 3 & 6  & 7 & 21 &  7 & 15   & 37 &  235   & 33 & 105 &  353     &702  &  577  \\
$d_{\bQ}^-$         & 0 & 0 & 0  & 1  & 1 &   9  & 3  &  7 & 19 &  163    & 17 &    65     &257&  502   & 433    \\
$d_{2}$                & 2 & 5 & 8 &  13 & 18 & 36  & 7& 15 & 37 &   235    & 34 &  106 &354  & 702  & 578  \\
$d_{2}^-$             & 2 & 3 & 5 & 8   & 12 & 24  &3 & 7 & 19 &  163    & 17 & 65 & 257 &   502 &  433\\
\end{tabular}
$$

\section{Reconstruction theorem}
\label{sect:recon}

The examples offered so far might suggest that very few invariants
in $\bCB_n(G)$ are actually realized geometrically by smooth projective
varieties $X \actsfromright G$. If one allows nonrational examples far more
invariants arise:
\begin{prop}
Let $p$ be a prime. Then $\bCB_n(C_p)$ is generated as an abelian group
by $\beta(X \actsfromright C_p)$, where $X$ is smooth and projective.
\end{prop}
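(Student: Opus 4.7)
The plan is to realize each generator of $\bCB_n(C_p)$ as an integer linear combination of $\beta$-classes of smooth projective $C_p$-varieties. By Proposition~\ref{prop:81}, it suffices to consider symbols $[a_1,\dots,a_n]$ with at least one $a_i\ne 0$ in $\bZ/p$, which is the generation condition for $C_p$ when $p$ is prime.

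The two main building blocks are \emph{cyclic $p$-covers} and \emph{products}. For the first: given $c\in(\bZ/p)^{\times}$ and a generic smooth hypersurface $f \in H^0(\bP^n,\mathcal O(pd))$, the smooth projective variety $Y_c=\{w^p=f\}\subset \bP(1,\dots,1,d)$ with $C_p$-action $w\mapsto \zeta_p^c w$ has fixed locus equal to the branch divisor with normal weight $c$, yielding $\beta(Y_c)=[c,0,\dots,0]$. Varying $c$ realizes every symbol with a single nonzero entry. For the second: given smooth projective $C_p$-varieties $Y_1,\dots,Y_r$, the diagonal action on $X=Y_1\times\cdots\times Y_r$ has $\beta(X)$ equal to a sum over tuples of fixed components of the concatenated tangent-weight symbols. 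Taking curves $Y_i$ with prescribed ramification data (cyclic $p$-covers of $\bP^1$ or of higher-genus bases, subject to the constraint $\sum c_{i,j}\equiv 0 \pmod p$ imposed on each factor by the existence of the cover) produces a rich supply of weight combinations. One then proceeds by induction on the number of nonzero entries in the target symbol: the base case is handled by cyclic covers, and the induction step uses the blowup relation $(\mathbf{B}_2)$ together with the multiplication structure of Section~\ref{subsect:mult} to reduce general symbols to $\bZ$-combinations of symbols with fewer distinct nonzero weights.

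The hard part is to verify that these combined constructions actually span $\bCB_n(C_p)$. For rational $X$, the equivariant holomorphic Lefschetz formula imposes arithmetic constraints on the fixed-point weight data (closely related to the torsion phenomenon of Section~\ref{subsect:torsion}), and these constraints prevent certain generators from appearing in the $\bZ$-span of $\beta$-classes restricted to rational varieties. The resolution, emphasized in the paragraph preceding the proposition, is to admit \emph{nonrational} $X$: high-genus curves, abelian varieties, K3 surfaces with non-symplectic $C_p$-actions, and higher-dimensional analogues, whose Hodge structure and equivariant cohomology remove the restriction. Once these are included, verifying the spanning reduces to a finite combinatorial check against the presentation of $\bCB_n(C_p)$ given by Proposition~\ref{prop:81}: one tracks how each generator symbol is expressed, via $(\mathbf{B}_2)$ and the multiplicative/comultiplicative structure, in terms of the $\beta$-classes from cyclic covers and from products with curves having flexible weight data.
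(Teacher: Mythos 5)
Your reduction to symbols with a zero entry (handled by cyclic $p$-covers, or in the paper by $D\times\bP^1$) and your observation that one must admit nonrational $X$ both match the paper. But the core of the proposition --- realizing each symbol $[a_1,\dots,a_n]$ with $0<a_1<\dots<a_n<p$ --- is where your argument has a genuine gap. Your only geometric source for such symbols is products of curves with $C_p$-action, and these cannot isolate a single symbol: a cyclic $p$-cover of a curve with a fixed point of weight $a_i$ necessarily has at least two fixed points (a cover of $\bP^1$ branched at one point is trivial, and the ramification data must sum to $0$ mod $p$), so $\beta(Y_1\times\cdots\times Y_n)$ is always a sum of at least $2^n$ concatenated symbols. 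Whether the $\bZ$-span of all such sums, together with the relations $(\mathbf{B}_2)$, exhausts $\bCB_n(C_p)$ is exactly the content of the proposition, and you defer it to an unexecuted ``finite combinatorial check against the presentation.'' That check is the hard part, and nothing in your setup guarantees it succeeds.

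The paper avoids this entirely with a different construction (Lemma~\ref{lemm:CI}): for each symbol in a prescribed nonnegative combination, take the $n$-dimensional representation $V$ with those weights, form $W=\bigoplus (V\oplus\bC)^{m}$, and inside $\bP(W)$ choose a high-degree $C_p$-invariant smooth complete intersection $X$ of dimension $n$ passing through the distinguished fixed points and tangent there to the prescribed $n$-planes; existence follows from interpolation on $\bP(W)/C_p$ plus Bertini, and a codimension count shows $X$ can be made to miss every other component of $\bP(W)^{C_p}$. This produces a smooth projective irreducible $X$ whose $\beta$-class is \emph{exactly} the prescribed combination --- in particular any single symbol with distinct nonzero entries --- with no parasitic fixed points to cancel afterwards. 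If you want to salvage your route, you would need either to carry out the combinatorial spanning argument for products of curves explicitly, or to replace that step with a construction, like the equivariant complete intersections above, that realizes individual symbols on the nose.
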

\begin{proof}
We proceed by induction on $n$. The case of $n=1$ follows from the 
Riemann existence theorem applied to cyclic branched covers of degree $p$ with
the prescribed ramification data.  (See also Lemma~\ref{lemm:CI} below.)

For the symbols $[a_1,\ldots,a_{n-1},0]$ we construct $(n-1)$-dimensional varieties $D$ 
with the prescribed invariants and $D\times \bP^1$ with trivial action on the second 
factor. Since $[a,a,a_3,\ldots,a_n]=[0,a,a_3,\ldots,a_n]$ we may focus on
symbols $[a_1,a_2,\ldots,a_n], 0<a_1 < a_2< \ldots <a_n<p.$ 
We are reduced to the following lemma:

\begin{lemm} \label{lemm:CI}
Any sum 
\begin{equation} \label{symbolsum}
\sum m_{[a_1,a_2,\ldots,a_n]}[a_1,a_2,\ldots,a_n]
\end{equation}
of symbols 
$$
[a_1,a_2,\ldots,a_n], \quad 0<a_1 < a_2< \ldots <a_n<p,
$$
with nonnegative coefficients,
is realized as 
$\beta(X \actsfromright C_p)$, where $X$ is smooth, projective, and irreducible.
\end{lemm}
For each symbol $[a_1,a_2,\ldots,a_n]$ appearing in the sum,
take an $n$-dimensional representation $V_{[a_1,a_2,\ldots,a_n]}$ with the prescribed
weights and the direct sum
$$W_{[a_1,a_2,\ldots,a_n]} = (V_{[a_1,a_2,\ldots,a_n]} \oplus \bC)^{m_{[a_1,a_2,\ldots,a_n]}}$$
where $\bC$ is the trivial representation of $C_p$. Write 
$$W= \oplus W_{[a_1,a_2,\ldots,a_n]},$$ 
where the index is over the terms appearing in (\ref{symbolsum}),
and consider the projectivization $\bP(W)$ and the $n$-planes
$$P_{[a_1,a_2,\ldots,a_n],j} \subset  \bP(W_{[a_1,a_2,\ldots,a_n]}), \quad j=1,\ldots,m_{[a_1,a_2,\ldots,a_n]}$$
associated with the summands of $W_{[a_1,a_2,\ldots,a_n]}$,
each with distinguished fixed point 
$$
p_{[a_1,a_2,\ldots,a_n],j}=(0:0:\cdots:0:1).
$$ 
The action on 
$$
\mathcal T_{p_{[a_1,a_2,\ldots,a_n]}}P_{[a_1,a_2,\ldots,a_n],j}
$$ 
coincides
with the action on $V_{[a_1,a_2,\ldots,a_n]}$.  The fixed points of $\bP(W)$ 
correspond to the eigenspaces for the $C_p$ action. Write 
$M=\sum m_{[a_1,a_2,\ldots,a_n]}$ for the number of summands; each
weight occurs at most $M$ times. Thus the fixed point loci are projective 
spaces of dimension $\le M-1$.

Choose a high-degree smooth complete intersection $X$ of dimension $n$, 
invariant under the action of $C_p$, containing the $p_{[a_1,a_2,\ldots,a_n],j}$ 
and tangent to $P_{[a_1,a_2,\ldots,a_n],j}$ for each $[a_1,a_2,\ldots,a_n]$.
This complete intersection exists by
polynomial interpolation applied to the quotient $\bP(W)/C_p$; smoothness
follows from Bertini's Theorem. Since complete intersections of 
positive dimension are connected, the resulting $X$ is irreducible.

It only remains to show that such a complete intersection need not have fixed
points beyond those specified. Now $X$ has codimension 
$(M-1)(n+1)$, so we may assume it avoids the fixed point loci -- away from
the stipulated points
$p_{[a_1,s_2,\ldots,a_n],j}$ -- provided $(M-1)(n+1) \ge  M$.
It only remains to consider the special case $M=1$. Here we take
$$W=(V_{[a_1,a_2,\ldots,a_n]} \oplus \bC)^2,$$
imposing conditions at just one point $(0:0:\cdots:0:1)$. 
Here $X\subset \bP(W)$ has codimension
$n+1$ and the fixed point loci are $\bP^1$'s, so we may avoid extraneous
points of intersection.
\end{proof}

\section{Refined invariants}
\label{sect:refined}

\subsection{Encoding fixed points}
Since $\bCB_2(C_2)=0$ this invariant says {\bf nothing}
about involutions of surfaces! Bertini, Geiser, and De Jonqui\`eres
involutions are perhaps the most intricate parts of the classification, which relies on 
the study of fixed curves. This leads to a natural refinement of the invariants:
For the symbols of type $[a,0]$, 
corresponding to curves $F_{\alpha}\subset X$ fixed by $C_N$,
we keep track of the (stable) birational equivalence class of $F_{\alpha}$
and the element of $\bCB_1(C_N)$ associated with $[a]$.

In general, \cite{kontsevichpestuntschinkel} introduced a group combining 
the purely number-theoretic information encoded in $\bCB_n(G)$ 
with geometric information encoded in the Burnside group of fields from \cite{KT}. 
Let
$$
\mathrm{Bir}_{n-1,m}(k), \quad  0\le  m\le  n - 1,
$$
be the set of  $k$-birational equivalence classes of $(n-1)$-dimensional irreducible varieties over $k$, 
which are $k$-birational to products $W\times \mathbb A^m$, 
but not to $W'\times \mathbb A^{m+1}$, for any $W'$, and put
\begin{equation}
\label{eqn:bnk}
\bCB_n(G,k):= \oplus_{m=0}^{n-1} \oplus_{[Y]\in \mathrm{Bir}_{n-1,m}(k)}\bCB_{m+1}(G).
\end{equation}
Let $X$ be a smooth projective variety of dimension $n$ 
with a regular, generically free, action of an abelian group $G$. 
Put
$$
\beta_k(X\actsfromright G):=\sum_{\alpha} \beta_{k,\alpha},
$$
where, as before, the sum is over components $F_{\alpha}\subset X^G$ 
of the $G$-fixed point locus, but in addition to the 
eigenvalues $a_1,\ldots, a_{n-\dim(F_{\alpha})}\in A$ in the tangent space $\mathcal T_{x_{\alpha}} X$ one keeps information about the function field of the component $F_{\alpha}$.
Choosing $m_{\alpha}$ so that
$$
[F_{\alpha}\times \mathbb A^{n-1-\dim(F_{\alpha})} ] \in \mathrm{Bir}_{n-1,m_{\alpha}}(k)
$$
we set
$$
\beta_{k,\alpha}:= [a_1,\ldots, a_{n-\dim(F_{\alpha})},  
\underbrace{0, \ldots, 0}_{m_{\alpha}+1-n+\dim(F_{\alpha})}] \in
\mathcal B_{m_{\alpha}+1}(G),
$$
identified with the summand of (\ref{eqn:bnk}) indexed by $[F_{\alpha}\times \mathbb A^{n-1-\dim(F_{\alpha})}]$.
When $F_{\alpha}$ is not uniruled we get a symbol
in $\bCB_{\bCodim(F_{\alpha})}(G)$.

\begin{theo} \cite[Remark 5]{kontsevichpestuntschinkel}
The class 
$$
\beta_k(X\actsfromright G)\in \bCB_n(G,k)
$$
is a $G$-birational invariant. 
\end{theo}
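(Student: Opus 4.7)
The plan is to adapt the proof of Theorem~\ref{thm:inv} (the $G$-birational invariance of $\beta$) by incorporating tracking of stable birational classes. The overall strategy proceeds in three stages: reduce to invariance under a single equivariant blow-up via $G$-equivariant Weak Factorization; analyze the change in the fixed-point stratification under this blow-up; and check that both the symbolic contributions in the relevant summands of $\bCB_n(G,k)$ and the stable birational labels indexing those summands are preserved.

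First I would fix a $G$-equivariant blow-up $\pi\colon \tilde{X}\to X$ along a smooth $G$-stable center $Z\subset X$ with exceptional divisor $E=\bP(N_{Z/X})$, and decompose $X^G = \coprod_\alpha F_\alpha$. The fixed components split into three types according to their interaction with $Z$: those disjoint from $Z$ (which lift isomorphically, preserving both their symbols and their birational classes); those contained in $Z$ (the essential case); and those meeting $Z$ transversally in lower-dimensional strata. I would handle the three types in that order, noting that the disjoint case is automatic.

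The central computation is the case $F_\alpha = Z$. Writing $N_{Z/X} = \bigoplus_j N_j$ for the isotypic decomposition into eigen-subbundles with distinct weights $a_{i_1},\ldots,a_{i_s}$, the new $G$-fixed components in $\tilde X$ are the projective subbundles $F'_j := \bP(N_j)\subset E$. At a point of $F'_j$, the normal-bundle weights comprise $a_{i_j}$ along the radial direction in $E$ together with the differences $a_{i_k}-a_{i_j}$ along the transverse directions in the fiber, which is exactly the configuration appearing on the right-hand side of relation $(\mathbf{B}_r)$. Hence the sum of symbols contributed by the $F'_j$ equals the symbol contributed by $Z$ in the appropriate $\bCB_{m+1}(G)$. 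For the birational labels, each $F'_j\to Z$ is a Zariski-locally trivial projective bundle and therefore $F'_j$ is stably birational to $Z$; consequently $[F'_j\times\mathbb{A}^{n-1-\dim F'_j}]=[Z\times\mathbb{A}^{n-1-\dim Z}]$ in $\mathrm{Bir}_{n-1,m}(k)$, so all these new contributions lie in the same summand of $\bCB_n(G,k)$ as the original $\beta_{k,Z}$ and cancel against it.

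The main obstacle will be the mixed case, where $Z$ meets some $F_\alpha$ without containing or being contained in it. Here the strict transform $\tilde F_\alpha\to F_\alpha$ is itself a blow-up along the smooth center $Z\cap F_\alpha$, hence birational to $F_\alpha$ and thus contributing to the same summand; but the normal-bundle weights of $\tilde F_\alpha$ at points over $Z\cap F_\alpha$ differ from those of $F_\alpha$, and additional fixed components may appear in $E$ over $Z\cap F_\alpha$. The analysis must verify that, locally along $Z\cap F_\alpha$, the new weight configurations again assemble via $(\mathbf{B}_r)$ into the original configuration of weights on the normal bundle to $F_\alpha$, and that each freshly created component in $E$ is a projective bundle over a subvariety of $Z\cap F_\alpha$, hence stably birational to a known piece and contributing to a predictable summand. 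Combining these local balances with the unchanged contributions away from $Z$ yields $\beta_k(\tilde X\actsfromright G)=\beta_k(X\actsfromright G)$, as required.
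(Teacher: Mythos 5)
The paper itself does not prove this theorem (it cites \cite[Remark 5]{kontsevichpestuntschinkel}), so your proposal must stand on its own; its architecture --- equivariant Weak Factorization, reduction to a single blow-up along a smooth $G$-stable center, and the observation that projective bundles are stably birational to their bases so that all new contributions land in the summand of \eqref{eqn:bnk} indexed by the same class of $\mathrm{Bir}_{n-1,m}(k)$ --- is exactly the right one, and your treatment of the case $F_\alpha\subseteq Z$ is correct. (One small point there: $F_\alpha\subseteq Z$ forces $F_\alpha$ to be a whole connected component of $Z^G$, and the relevant relation is $(\mathbf{B}_r)$ with the $a_i$ the weights of $N_{Z/X}|_{F_\alpha}$ and the $b_j$ the weights of $N_{F_\alpha/Z}$ together with the zeros, all of which are untouched --- consistent with the form of $(\mathbf{B}_r)$.)

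The gap is in your mixed case, and it is a real one: you assert that the new weight configurations over $Z\cap F_\alpha$ should ``assemble via $(\mathbf{B}_r)$ into the original configuration of weights on the normal bundle to $F_\alpha$.'' If that were the mechanism, the invariant would \emph{change}: the strict transform $\widetilde F_\alpha$ is a single connected component of $\widetilde X^G$ whose weights, being locally constant, are read off at its generic point and hence already equal those of $F_\alpha$; it alone reproduces $\beta_{k,\alpha}$ in the correct summand. So the new components of $\widetilde X^G$ inside $E$ must contribute \emph{zero}, not reassemble the old symbol. The correct mechanism: let $W\subseteq Z^G$ be a component with $W\subsetneq F_\alpha$, and decompose $N_{Z/X}|_W=\bigoplus_j N_j$ into isotypic pieces. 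The weight-$0$ piece $N_0$ is nonzero precisely because $W\subsetneq F_\alpha$, and $\bP(N_0)$ is absorbed into $\widetilde F_\alpha$. Each remaining component $\bP(N_j)$, $j\neq 0$, carries the weight $a_{i_j}$ on $\bCO(-1)=N_{E/\widetilde X}$ and the weight $a_{i_0}-a_{i_j}=-a_{i_j}$ on the fiber directions toward $N_0$; a symbol containing two nonzero weights summing to $0$ vanishes by $(\mathbf{B}_2)$ (as used in the proof of Proposition~\ref{prop:81}), so these components contribute $0$ regardless of the stable birational class of $W$ --- which is essential, since $W$ need not be stably birational to anything already indexing a summand. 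Replacing your ``reassembly'' claim by this vanishing, and deleting the worry about weights of $\widetilde F_\alpha$ ``at points over $Z\cap F_\alpha$'' (only the generic, hence constant, weights enter the symbol), closes the argument.
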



\subsection{Encoding points with nontrivial stabilizer}
\label{subsect:burndef}

We continue to assume that $G$ is a finite abelian group, 
acting regularly and generically freely on a smooth variety $X$. 
Let $H\subset G$ arise as the stabilizer of some point of $X$,
$F\subset X^H$ an irreducible component of the fixed point locus with generic stabilizer $H$, and $Y$ the minimal $G$-invariant subvariety containing $G$.
In Section~\ref{subsect:resolve}, we will explain how to blow up $X$ so that
$Y$ is always a disjoint union of translates of $F$.

Additional information about the action of $G$ on $X$ is contained in the action of the quotient 
$G/H$, which could act 
on the function field of $F$, or by translating $F$ in $X$. 
The paper \cite{kreschtschinkel} introduced the group
$$
\Burn_n(G)
$$
as the quotient by certain relations of the free abelian group generated by symbols
\begin{equation}
\label{eqn:symbol}
(H, G/H\actsfromleft K, \beta),
\end{equation}
where $K$ is a $G/H$-Galois algebra over a field 
of transcendence degree $d\le n$ over $k$, up to isomorphism, and $\beta$ is a faithful $(n-d)$-dimensional 
representation of $H$ (see \cite[Def.~4.2]{kreschtschinkel} for a precise formulation of conditions on $K$ and relations).   

Passing to a suitable $G$-equivariant smooth projective model $X$ 
-- as discussed in Section~\ref{subsect:resolve} --  its class is defined by
\begin{equation}
\label{eqn:xg}
[X\actsfromright G] :=\sum_{H\subseteq G} \sum_Y (H, G/H\actsfromleft k(Y),\beta_Y(X))\in \mathrm{Burn}_n(G),
\end{equation}
where the sum is over all $G$-orbits of components $Y\subset X$ with generic stabilizer $H$ as above, the symbol records
the eigenvalues of $H$ in the tangent bundle to $x\in Y$ as well as the $G/H$-action 
on the total ring of fractions $k(Y)$.

\begin{exam} \label{exam:Klein2}
We revisit Example~\ref{exam:Klein1} using the dual basis of characters
$\chi_1, \chi_2$ of $G=C_2\times C_2$. Here we have
\begin{align*}
[\bP^1 \actsfromright G] =& (\left<1\right>,G\actsfromleft k(t)) + 
(\left<g_1\right>,  G/\left<g_1\right> \actsfromleft \{ (1:\pm i)\},\chi_1) \\
 &+ (\left<g_2\right>, G/\left<g_2\right>  \actsfromleft \{ (1:0),(0:1)\},\chi_2) \\
 &+ (\left<g_1g_2\right>, G/\left<g_1g_2\right> \actsfromleft \{ (1:\pm 1)\},\chi_1+\chi_2).
\end{align*}
The action on $k(\bP^1)=k(t)$ 
is by $g_1(t)=-t$ and $g_2(t)=-1/t$. 
\end{exam}

Blowup relations ensure that $[X\actsfromright G]$ is a well-defined
$G$-birational invariant -- see Section~\ref{subsect:blowupsample}
for more details.

There is a distinguished subgroup
$$
\Burn_n^{\rm triv}(G) \subset \Burn_n(G)
$$
generated by symbols $(1, G\actsfromleft K=k(X))$. 
For `bootstrapping' purposes -- where we seek invariants of $n$-folds
in terms of lower-dimensional strata with nontrivial stabilizers --
we may suppress these tautological symbols to get a quotient 
$$
\Burn_n(G) \to\Burn_n^{\rm nontriv}(G).
$$
And there is also a 
natural quotient group
$$
\Burn_n(G) \to \Burn_n^G(G)
$$
obtained by suppressing all symbols $(H, G/H\actsfromleft K, \beta)$ where $H$ is a proper subgroup of $G$. 
By \cite[Prop. 8.1 and Prop. 8.2]{kreschtschinkel}, there are natural surjective homomorphisms 
$$
 \Burn_n^G(G) \to \bCB_n(G,k)\to \bCB_n(G).
$$
\begin{exam}
The group $\Burn_1^{\rm nontriv}(G)$ is freely generated by nontrivial subgroups $H\subset G$
and injective characters $a:H \hookrightarrow \bG_m$, e.g. $\Burn_1^{\rm nontriv}(C_N)\cong \bZ^{N-1}$
and $\Burn_1^{\rm nontriv}(C_2\times C_2)\cong \bZ^3$.
\end{exam}

\subsection{Examples of blowup relations}
\label{subsect:blowupsample}
We illustrate the relations for fixed points of cyclic actions on surfaces.
More computations are presented in Section~\ref{subsect:examples};
the reader may refer to
Section 4 of \cite{kreschtschinkel} for the general formalism.
All the key ideas are manifest in the surface case because the full set
of blowup relations follows from those in codimension two --
see \cite[Prop.~8.1]{kreschtschinkel} and the special case
Prop.~\ref{prop:81} above.

Suppose that $G=C_N$ acts on the surface $X$ with fixed point $\mathfrak p$ and
weights $a_1$ and $a_2$ that generate $A=\Hom(C_N,\bG_m)$. Let $\widetilde{X}$
denote the blowup of $X$ at $\mathfrak p$ and $E\simeq \bP^1$ the exceptional
divisor.
Let $\oH=\operatorname{ker}(a_1-a_2)\subset G$
denote the generic stabilizer of $E$ which acts faithfully on the
normal bundle ${\mathcal N}_{E/\widetilde{X}}$ via
$\bar{a}_1=\bar{a}_2 \in \oA:=\Hom(\oH,\bG_m)$. 

Assume first that $a_1$ and $a_2$ are both prime to $N$, so $\mathfrak p$ is
an isolated fixed point of $X$.
The quotient $G/\oH$
(when nontrivial) acts faithfully on $E=\bP^1$ with
 fixed points $\mathfrak p_1$ and $\mathfrak p_2$.
If $\oH=G$ then $a_1=a_2$ and
we get the relation
$$(G,{\rm triv} \actsfromleft k=k(\mathfrak p), (a_1,a_1))
=(G,{\rm triv} \actsfromleft k(t)=k(E), (a_1)).$$
If ${\rm triv } \subsetneq H'\subsetneq G$ then   
\begin{align*} 
(G,{\rm triv}& \actsfromleft k=k(\mathfrak p), (a_1,a_2))
=(\oH, G/\oH \actsfromleft k(t)=k(E), \bar{a}_1=\bar{a}_2 )\\
&+(G, {\rm triv} \actsfromleft k=k(\mathfrak p_2), (a_1,a_2-a_1))\\
& +(G, {\rm triv} \actsfromleft k=k(\mathfrak p_1), (a_2,a_1-a_2))
\stepcounter{equation}\tag{\theequation}\label{eqn:blow1}
\end{align*}
where $G/\oH$ acts on $t$ by a primitive $d$th root of unity
with $d=|G/\oH|$.
If $\oH$ is trivial then
\begin{align*}
(G,{\rm triv} \actsfromleft k=k(\mathfrak p), & (a_1,a_2))
=(G, {\rm triv} \actsfromleft k=k(\mathfrak p_2), (a_1,a_2-a_1)) \\
&+(G, {\rm triv} \actsfromleft k=k(\mathfrak p_1), (a_2,a_1-a_2)).
\stepcounter{equation}\tag{\theequation}\label{eqn:blow2}
\end{align*}

Assume now that $a_1=m_1n_1$ and $a_2=m_2n_2$ where $n_1,n_2|N$
(and are relatively prime modulo $N$)
and $m_1$ and $m_2$ are prime to $N$ and each other.
Then we have
$$\mathfrak{p} \in F_1 \cap F_2$$
for irreducible curves $F_1$ and $F_2$ with generic stabilizers
$C_{n_1}$ and $C_{n_2}$ respectively.
Let $\widetilde{F_1}, \widetilde{F_2} \subset \widetilde{X}$ denote
the proper transforms of $F_1$ and $F_2$ and $\mathfrak p_1,\mathfrak p_2 \in E$
their intersections with the exceptional divisors.
Thus the contribution to the strata containing $\mathfrak p$ is
\begin{align*}
(G, {\rm triv} \actsfromleft k(\mathfrak p), & (a_1,a_2)) \\
+(C_{n_1}, C_N/C_{n_1} \actsfromleft k(F_1), a_2)  
&+(C_{n_2}, C_N/C_{n_2} \actsfromleft k(F_2), a_1)
\end{align*}
with the latter two terms appearing in the symbol sum on
$\widetilde{X}$, with the $F_i$ replaced by the $\widetilde{F_i}$.
Note that $\oH=\operatorname{ker}(a_1-a_2)\subsetneq G$
because $a_1\not \equiv a_2$.
Here the blowup formula takes the form (\ref{eqn:blow1}) or (\ref{eqn:blow2})
depending on whether $H'$ is trivial or not.

Now suppose that $a_2=0$.
Let $F\subset X$ denote the irreducible component of the fixed locus
containing $\mathfrak p$, so that $a_1$ is the character by which
$G$ acts on ${\mathcal{N}}_{F/X}$. Write $\widetilde{F} \subset \widetilde{X}$
for the proper transform of $F$, $\mathfrak p_1=\widetilde{F} \cap E$,
and $\mathfrak p_2\in E$ for the other fixed point.
Here we get the relation
$$(G,G\actsfromleft k(F),a_1)=
(G,G\actsfromleft k(F),a_1) + (G,G\actsfromleft k(\mathfrak p_2),(a_1,-a_1)),$$
whence the latter term vanishes.

\subsection{Examples}
\label{subsect:examples}
We now complement the computations in Section~\ref{sect:surf}, for $G=C_N$, and small $N$. As before, we work over an algebraically-closed based field $k$
of characteristic zero. 

\

\noindent
($N=2$)
\begin{itemize}
\item $\bCB_2(C_2)=0$.
\item $\bCB_2(C_2,k) \cong \Burn_2^{C_2}(C_2)$; has a copy of
$\bCB_1(C_2)=\bZ$, for every isomorphism class of curves of positive genus.
\item $\Burn_2(C_2)=\Burn_2^{\mathrm{triv}}(C_2)\oplus \Burn_2^{C_2}(C_2)$.
\end{itemize}

\

\noindent
($N=3$)
\begin{itemize}
\item $\bCB_2(C_3)\cong \bZ$, generated by $[1,1]$,
\[ [1,2]=0, \qquad [2,2]=-[1,1]. \]
\item 
$\bCB_2(C_3,k)\cong \Burn_2^{C_3}(C_3)$, is a direct sum of $\bZ$, corresponding to
points and rational curves, and a copy of
$\bCB_1(C_3)\cong \bZ^2$ for every isomorphism class of curves of positive genus.
\item 
$\Burn_2(C_3)=\Burn_2^{\mathrm{triv}}(C_3)\oplus \Burn_2^{C_3}(C_3)$.
\end{itemize}

\

\noindent
($N=4$)
\begin{itemize}
\item $\bCB_2(C_4)\cong \bZ$, generated by $[1,2]$,
\[ [1,1]=2[1,2],\quad [1,3]=0,\quad [2,3]=-[1,2],\quad [3,3]=-2[1,2]. \]
\item $\bCB_2(C_4,k)$ is a direct sum of $\bZ$, corresponding to points
and rational curves, and a copy of $\bCB_1(C_4)\cong\bZ^2$ for every
isomorphism class of curves of positive genus.
\item $\Burn_2(C_4)=\Burn_2^{\mathrm{triv}}(C_4)\oplus \Burn_2^{\mathrm{nontriv}}(C_4)$:
$\Burn_2^{\mathrm{nontriv}}(C_4)$ has,
for every isomorphism class of curves of positive genus,
a copy of $\bCB_1(C_4)$ and a copy of $\bCB_1(C_2)$, with an
additional copy of $\bCB_1(C_2)$ for every
isomorphism class of curves of positive genus with involution.

We claim points and rational curves
contribute 
$$\bZ^2 \subset \Burn_2^{\mathrm{nontriv}}(C_4),$$ 
generated by $[1,2]$ and $[2,3]$ where
$$[i,j]=(C_4,k,(i,j)), \quad (i,j)=(1,1),(1,2),(1,3),(2,3),(3,3).$$
Abusing notation, write
$$[1,0] = (C_4,k(t),1) \quad
        [3,0] = (C_4,k(t),1).$$
We write down the blowup relations, both orbits of points with special
stabilizers and orbits on one-dimensional strata 
with nontrivial stabilizer:
\begin{align*}
        [0,1]&=[0,1]+[1,3] \\
        [0,3]&=[0,3]+[1,3] \\
        [1,1]&=[1,0] \\
        [1,2]&=[1,1]+[2,3] \\
        [1,3]&=[1,2]+[2,3]+(C_2,C_2 \actsfromleft k(t), 1) \\
        [2,3]&=[1,2]+[3,3] \\
        [3,3]&=[3,0] \\
     (C_2,C_2 \actsfromleft k^2, (1,1))  &=(C_2,C_2 \actsfromleft k(t)^2, 1) \\
   (C_2,C_2 \actsfromleft k(t), 1)&=(C_2,C_2 \actsfromleft k(t), 1)+(C_2,C_2 \actsfromleft k(t)^2, 1) \\
            (C_2,C_2 \actsfromleft k(t)^2, 1)&=(C_2,C_2 \actsfromleft k(t)^2, 1)+(C_2,C_2 \actsfromleft k(t)^2, 1)
\end{align*}
Thus we find
\begin{gather*}
[1,3]=0, \\
[0,3]=[3,3]=-[1,2]+[2,3],\\
[0,1]=[1,1]=[1,2]-[2,3], \\
(C_2,C_2\actsfromleft k(t),1)=-[1,2]-[2,3], \\
(C_2,C_2\actsfromleft k^2(t),1)=0, \\
(C_2,C_2\actsfromleft k^2,(1,1))=0.
\end{gather*}
\end{itemize}
Here $k^n$ denotes the total ring of fractions for an orbit of length
$n$ and $k^n(t)$ the total ring of fractions of the exceptional locus
of the blowup of such an orbit.
Furthermore, $C_2\actsfromleft k(t)$ is via $t\mapsto -t$.

For example, the linear action on $\bP^2$
$$(x:y:z) \mapsto (x:iy:-iz)$$
has invariant
$$[1,3]+[1,2]+[2,3]+(C_2,C_2 \actsfromleft k(y/z), 1)=0
\in \Burn_2^{\mathrm{nontriv}}(C_4).$$

\

We close this section with a noncyclic example: 

\

\noindent
($C_2\times C_2$) \newline
Write
$$G=C_2\times C_2=\{ 1,g_1,g_2,g_3=g_1g_2\}$$
and 
$$G^{\vee} =  \{0,\chi_1,\chi_2,\chi_3=\chi_1+\chi_2\}, \,
\chi_1(g_1)=\chi_2(g_2)=-1,\chi_1(g_2)=\chi_2(g_1)=1$$
as before.
\begin{itemize}
\item $\bCB_2(C_2\times C_2)=
\{0,[\chi_1,\chi_2],[\chi_1,\chi_3],[\chi_2,\chi_3]\}$
with the structure of the Klein four group presented in
Section~\ref{sect:surf}.
\item $\bCB_2(C_2\times C_2,k) \cong \bCB_2(C_2\times C_2)$ 
as $\bCB_1(C_2\times C_2)=0$.
\item $\Burn_2(C_2\times C_2)=\Burn^{\rm triv}(C_2\times C_2)
\oplus \Burn_2^{\rm nontriv}(C_2 \times C_2)$
where the second term is a direct sum of 
the subgroup $R$ generated by points and rational curves,
a copy of 
$$\Burn_1^{\rm nontriv}(C_2\times C_2)\cong \bZ^3$$
for each curve of positive genus, and another copy
for each curve of positive genus equipped with an involution. 
\end{itemize}

The group $R$ fits into an exact sequence
$$0 \ra \Burn_1^{\rm nontriv}(C_2\times C_2) \ra R \ra \bCB_2(C_2\times C_2) \ra 0$$
obtained by computing generators and relations.

\

\paragraph{{\bf Generators:}}
Here we take $1\le i < j \le 3$:
\begin{align*}
[\chi_i,\chi_j] &:=(G, {\rm triv} \actsfromleft k, (\chi_i,\chi_j)) \\
e_i &:=(\left<g_i\right>, G/\left<g_i\right> \actsfromleft k(t), \bar{\chi}_i=1)\\
q_i &:=(\left<g_i\right>, G/\left<g_i\right> \actsfromleft k^2, (\bar{\chi}_i,\bar{\chi}_i)=(1,1)) \\
f_i &:=(\left<g_i\right>, G/\left<g_i\right> \actsfromleft k^2(t), \bar{\chi}_i=1)
\end{align*}

\paragraph{{\bf Relations:}}
Here we choose $h$ so that $\{h,i,j\}=\{1,2,3\}$:
\begin{align*}
[\chi_i,\chi_j] &= e_h + [\chi_h,\chi_i] + [\chi_h,\chi_j] \quad
				  \text{(blow up fixed point)} \\
q_i &= f_i \quad  \text{(blow up orbit $q_i$)} \\
e_i & = e_i + f_i \quad  \text{(blow up general orbit of $e_i$)} 
\end{align*}
Thus the $q_i$ and $f_i$ are zero and we have
$$R/\left<e_1,e_2,e_3\right> = \bCB_2(C_2\times C_2).$$

We revisit the actions of $C_2\times C_2$
on rational surfaces in Section~\ref{sect:surf} using these new techniques:
\begin{enumerate}
\item[(1)]{the action $(x,y) \mapsto (\pm x^{\pm 1},y)$ on $\bP^1\times \bP^1$
has invariant 
$$
f_1+f_2+f_3=0;
$$
}
\item[(2)]{the action $(x,y) \mapsto (\pm x,\pm y)$ on $\bP^1 \times \bP^1$
has invariant 
$$
2e_1+2e_2+4[\chi_1,\chi_2]=0;
$$}
\item[(3)]{the diagonal action on $\bP^1\times \bP^1$ has invariant
$$
2(q_1+q_2+q_3)=0;
$$}
\item[(4)]{the action on the conic fibration admits an elliptic curve
$$F=\{y_1^2(x_1-ax_2)(x_1+ax_2)=y_2^2(x_1-bx_2)(x_1+bx_2)\}$$
that is fixed by $g_2$ and fibers over $(x_1:x_2)=(0:1),(1:0)$
fixed by $g_1$ and thus has invariant
$$4[\chi_1,\chi_2]+2e_1 + 
(\left<g_2\right>,\left<g_1\right>\actsfromright k(F),1) \neq 0$$
where $g_1$ acts on $k(F)$ by $x_1/x_2 \mapsto -x_1/x_2$;}
\item[(5)]{the action on the degree two del Pezzo surface has
nontrivial invariant arising from the positive genus curves
fixed by $g_1$ and $g_2$;}
\item[(6)]{the action on the degree one del Pezzo surface has 
nontrivial invariant arising from the positive genus curves
fixed by the involution.}
\end{enumerate}

\subsection{Limitation of the birational invariant}

It is an elementary observation, recorded in \cite[App.\ A]{reichsteinyoussinessential},
that the presence of a
point fixed by a given abelian subgroup $H$ of $G$ is a birational invariant of
a smooth projective variety $X$ with generically free $G$-action.
Two smooth $n$-dimensional projective varieties with generically free $G$-action might be
distinguished in this way but nevertheless have the same class in
$
\Burn_n^{\rm nontriv}(G).
$

Indeed, letting $C_2$ act on $\bP^1$, we consider the corresponding
product action of $C_2\times C_2$ on $\bP^1\times \bP^1$.
As well, the action of $C_2\times C_2$ on
$\bP^1$ gives rise to a diagonal action on $\bP^1\times \bP^1$.
The former, but not the latter, has a point fixed by $C_2\times C_2$,
so the actions belong to two distinct birational classes.
However, both actions give rise to a vanishing class in
$\Burn_2^{\rm nontriv}(C_2\times C_2)$.

\subsection{Reprise: Cyclic groups on rational surfaces}

As already discussed in Section~\ref{sect:surf}, 
the presence of higher genus curves in the fixed locus of the action of a 
cyclic group of prime order on a rational surface is an important invariant 
in the study of the plane Cremona group. For example, for $G=C_2$, these curves
make up entirely the group $\Burn_2^{G}(G)$ 
and famously characterize birational involutions of the plane up to conjugation 
\cite{baylebeauville}.

For more general cyclic groups acting on rational
surfaces, we recover the NFCA invariant of Blanc \cite{blancsubgroups},
which governs his classification. 

We recall the relevant definitions: 
Let $g\in \mathrm{Cr}_2$ be a nontrivial element of the plane Cremona group, of finite order $m$. 
\begin{itemize}
\item {\em Normalized fixed curve} \cite{deFer}:
$$
\mathrm{NFC}(g):=\begin{cases} 
 \text{isomorphism class of the normalization  of the union}  \\
 \text{of irrational components of the fixed curve.}
 \end{cases}
$$
\item {\em Normalized fixed curve with action}:
$$
\mathrm{NFCA}(g):=\left((\mathrm{NFC}(g^r ), g\mid_{\mathrm{NFC}(g^r)}  \right)^{m-1}_{r=1},
$$
where $g\mid_{\mathrm{NFC}(g^r)} $ is the automorphism induced by $g$ on $\mathrm{NFC}(g^r )$.
\end{itemize}
One of the main results in this context is the following characterization:

\begin{theo}\cite{blancsubgroups} 
Two cyclic subgroups $G$ and $H$ of order $m$ of  $\mathrm{Cr}_2$ are conjugate if and only if 
$$
\mathrm{NFCA}(g) = \mathrm{NFCA}(h),
$$
for some generators $g$ of $G$ and $h$ of $H$. 
\end{theo}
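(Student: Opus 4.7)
The forward implication is essentially tautological: if $\varphi G \varphi^{-1} = H$ inside $\mathrm{Cr}_2$ with $h = \varphi g \varphi^{-1}$, then after passing to common smooth projective regularizations $\varphi$ induces isomorphisms between the fixed loci of $g^r$ and $h^r$ for each $r$ that intertwine the induced automorphisms, yielding $\mathrm{NFCA}(g) = \mathrm{NFCA}(h)$. One must also check that $\mathrm{NFCA}(g)$ is intrinsic, i.e.\ independent of the choice of smooth projective $G$-equivariant model. This follows from $G$-equivariant weak factorization (used already for Theorem~\ref{thm:inv}): a blowup of a smooth $G$-stable center can only change the fixed locus by adding rational components, which are discarded from $\mathrm{NFC}$ by definition.

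For the converse, the plan is to regularize the actions on smooth projective rational surfaces $X$ and $Y$, then run the $G$-equivariant Minimal Model Program of Manin--Iskovskikh to reduce $X$ and $Y$ to $G$-Mori fiber spaces, each of which is either a $G$-del Pezzo surface of $G$-Picard rank one or a $G$-conic bundle over $\bP^1$. The finite cyclic subgroups of order $m$ in the automorphism groups of these models are completely classified (Dolgachev--Iskovskikh, Bayle--Beauville, Blanc), so the problem becomes finite and combinatorial: I would compute $\mathrm{NFCA}$ on each item of the list and verify that it separates the $\mathrm{Cr}_2$-conjugacy classes appearing there. The base case $m = 2$ is covered by the theorem of Bayle--Beauville cited in Section~\ref{sect:history}, which one uses as an input.

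The main step is then the link analysis: given two $G$-Mori fiber spaces with matching $\mathrm{NFCA}$, one constructs an explicit chain of $G$-equivariant Sarkisov links between them. Each elementary link is drawn from a finite list (Bertini, Geiser, and de Jonqui\`eres-type involutions, and links between del Pezzo models of various degrees), and preservation of $\mathrm{NFCA}$ along such a link is immediate since any blow-up or contraction of a smooth $G$-orbit of rational curves does not alter the positive-genus components of any $\mathrm{Fix}(g^r)$. Conversely, from the $\mathrm{NFCA}$ data one reads off the genera and automorphism types of the higher-genus fixed curves, which, combined with Riemann--Hurwitz on the quotient $X/\langle g^r\rangle$, heavily restrict the possible Mori-fiber-space structures.

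The hard part is the exhaustiveness of this case-by-case analysis: one must rule out coincidences where two non-conjugate entries in the Dolgachev--Iskovskikh classification accidentally share the same $\mathrm{NFCA}$. The key technical leverage, particular to surfaces, is that for cyclic $g$ of order $m \geq 2$ acting generically freely on a rational surface, the positive-genus components of the fixed loci of the powers $g^r$ are severely constrained in number and in genus by Riemann--Hurwitz, leaving only finitely many possibilities at each stage. This reduces the verification to a bounded enumeration, which is then matched against the link catalogue to complete the reconstruction.
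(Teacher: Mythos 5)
This theorem is quoted from Blanc \cite{blancsubgroups}; the survey gives no proof of it, so there is no in-paper argument to compare against. Your forward direction is fine: conjugacy plus birational invariance of the normalized irrational fixed curves (blowups of points only introduce rational components, which $\mathrm{NFC}$ discards) is indeed how one sees that $\mathrm{NFCA}$ is well defined and conjugation-invariant. Your outline of the converse --- regularize, run the $G$-equivariant MMP to del Pezzo and conic-bundle models, compare against the Dolgachev--Iskovskikh/Blanc classification, and connect matching entries by Sarkisov links --- is also the correct broad strategy.

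However, there is a genuine gap in the way you propose to close the argument. You claim that Riemann--Hurwitz bounds the number and genera of the positive-genus fixed curves, ``leaving only finitely many possibilities at each stage,'' so that the verification becomes ``a bounded enumeration.'' This is false already for $m=2$: de Jonqui\`eres involutions of arbitrarily large degree fix hyperelliptic curves of arbitrarily large genus, and Geiser and Bertini involutions fix curves of genus $3$ and $4$ that vary in positive-dimensional moduli. The classification is an infinite list of \emph{families} with continuous parameters, not a finite list of conjugacy classes, and $\mathrm{NFCA}$ takes values in a set that includes the isomorphism class of a curve with automorphism. The actual content of the converse is therefore not an enumeration but a reconstruction statement: within each family one must show that the data of the fixed curves together with the induced automorphisms determines the surface with action up to equivariant birational conjugacy (e.g.\ recovering a del Pezzo surface of degree $1$ or $2$ as a cyclic cover branched along the prescribed curve), and one must exhibit explicit conjugating maps between the different models realizing the same $\mathrm{NFCA}$. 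None of this is supplied by your sketch, and the false finiteness claim is what lets you avoid it; as written, the proposal does not yield a proof of the converse.
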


It follows immediately from the definition that the information encoded in 
$\Burn_2(G)$, for $G=C_m$, is {\em equivalent} to $\mathrm{NFCA}(g)$.  

\begin{rema}
It would be interesting to use symbol invariants to organize the
classification of 
cyclic group actions on rational threefolds.  
\end{rema}

\section{Cubic fourfolds}
\label{sect:cubic4}

In this section we discuss several illustrative examples, showing 
various aspects of the new invariants introduced above. 
Equivariant geometry in dimension $\le 3$ is, in principle, 
accessible via the Minimal Model Program, 
and there is an extensive literature on factorizations of equivariant birational maps. 
We focus on dimension four, and in particular, 
on cubic fourfolds.

Let $X\subset \bP^5$ be a smooth cubic fourfold. No examples are
known to be irrational! 
Here we show that there are actions $X\actsfromright G$ 
where $G$-equivariant rationality fails, including actions on rational cubic fourfolds. 

We found it useful to consult the classification of possible abelian automorphism 
groups of cubic fourfolds in \cite{may}.   
Here is a list of $N>1$, such that the cyclic group $C_N$ acts on a smooth cubic fourfold:
$$
N= 2,3,4,5,6,8,9,10,11,12,15,16,   18, 21,  24,30,32,33,36, 48.
$$
Note that 
$$
\mathcal B_4(C_N)\otimes \mathbb Q =0, \quad  \text{ for all $N < 27$, $N=30,32$}.
$$ 
We record their dimensions $d_{\bQ}$ in the remaining cases:
$$
\begin{array}{c|ccc}
N           & 33 & 36 & 48 \\
\hline 
d_{\bQ} &  2  &  3  & 7 
\end{array}
$$
One can also work with finite coefficients: let 
$$
d_p=d_p(N):=\dim \mathcal B_4(C_N)\otimes \mathbb F_p,
$$ 
we have $d_2,d_3,d_5=0$, for all $N\le 15$, and $N=18,21$. In the other cases, we find: 

$$
\begin{array}{c|ccccccc}
N    & 16 & 24 & 30 & 32 & 33 & 36 & 48\\
\hline 
d_2 &  1  &  5  & 10  &  12 & 3 &  19 & 50 \\
d_3 &   0    &  0    &   0    &  0     & 2 & 3 & 7  \\
d_5  &    0   &   0   &     0    &   0    &  2& 3& 7  \\
d_7  &      0 &    0  &     0    &    0   &  2  & 3 & 7
\end{array}
$$

\

Thus, to exhibit applications of $\bCB_4(C_N)$ we have to look at large $N$.

\

\noindent
{\bf Using $\mathcal B_n(G)$:} Consider $X\subset \bP^5$ given by
\begin{equation}
\label{eqn:36}
 x_1^2x_2+x_2^2x_3+x_3^2x_1+x_4^2x_5+x_5^2x_0+x_0^3 =0.
\end{equation}
It carries the action of $G=C_{36}$, with weights  
$$
(0,4,28,16,9,18)
$$ 
on the ambient projective space, 
and isolated $G$-fixed points
$$
P_1=(0:1:0:0:0:0), \ldots , P_5:=(0:0:0:0:0:1).
$$ 
Computing the weights in the corresponding tangent spaces, we find that 
$\beta(X)$ equals
$$
[4,24,31,22]+[28,24,19,10]+[24,12,7,34]+[9,5,17,29]+[14,26,2,9].
$$
Solving a system of 443557 linear equations in 82251 variables, we find, by 
computer, that 
$$
\beta(X)\neq \beta(\bP^4\actsfromright C_{36} )=    0 \in \mathcal B_4(C_{36})\otimes \mathbb F_2=\mathbb F_2^{19}.
$$
This implies that $X$ is not $G$-equivariantly birational to $\bP^4$. 

\

\

\noindent
{\bf Using co-multiplication:} 
The fourfold $X\subset \bP^5$ given by
\begin{equation}
\label{eqn:48}
x_2^2x_3+x_3^2x_4+x_4^2x_5+ x_5^2x_0 + x_0^3 + x_1^3=0
\end{equation}
carries an action of $C_{48}$ with weights $(0,16,3,-6,12,-24)$, and isolated fixed points. 
We find that $\beta(X)\in \bCB_{4}(C_{48})$ equals
$$
[-3,13,9,-27]+[6,22,9,-18]+[-12,4,-9,-18]+[40,27,18,36].
$$
Here, we apply the co-multiplication formula from Section~\ref{subsect:mult}
to the image $\beta^-(X)$ of $\beta(X)$ under the projection 
$$
\bCB_{4}(C_{48}) \ra \bCB_{4}^-(C_{48}).
$$
Let 
$$
G':=\bZ/3\bZ, \quad  G'':=\bZ/16\bZ, \quad n'=1\quad \text{ and } \quad n''=3.
$$ 
We have a homomorphism 
$$
\nabla^-: \bCB_{4}^-(C_{48}) \to \bCB_{1}^-(C_{3}) \otimes  \bCB_{3}^-(C_{16}) 
$$
and we find  that $\nabla^-(\beta^-(X))$ equals  
$$
 [1]^-\otimes ([-1,3,-9]^- + [2,3,-6]^- + [-4,-3,-6]^- + [9,6,12]^-).
$$ 
Now we are computing in $\bCB_{3}^-(C_{16})$, a much smaller group. 
We have 
$$
\dim \bCB_{3}(C_{16})\otimes \bQ  =3, \text{ but } \dim \bCB_{3}^-(C_{16})\otimes \bQ  =0,
$$
however
$$
\dim \bCB_{3}(C_{16})\otimes \bF_2  =8 \text{ and } \dim \bCB_{3}^-(C_{16})\otimes \bF_2  =7.
$$
We find, by computer, that 
$$
[-4,-3,-6] = [9,6,12] \in \bCB_{3}(C_{16})\otimes \bF_2,  
$$
so that  the sum of these terms does not contribute to $\nabla^-(\beta^-(X))$, 
and check that 
$$
[-1,3,-9]^- + [2,3,-6]^- = [1,2,10]^-\neq 0 \in \bCB_{3}^-(C_{16}).
$$
It follows that 
$$
\nabla^-(\beta^-(X)) \neq 0, \quad \text{ thus} \quad \beta^-(X)\neq 0 \in \bCB_{4}^-(C_{48}),
$$
and this action of $C_{48}$ on the cubic fourfold is not equivariantly birational to a linear action on $\bP^4$, by Proposition~\ref{prop:cn}.

\

\noindent
{\bf Using $\mathcal B_{n}(G,k)$:}
There is also another way to analyze the fourfold in \eqref{eqn:48}:
observe that the divisor $Y\subset X$, a smooth cubic threefold given by 
$x_1=0$, is fixed by $C_3 \subset C_{48}$. 
This divisor is irrational, and we get a nontrivial contribution to 
$\beta_k(X)\in \bCB_{4}(C_3,k)$, in the summand labeled by 
$Y\in \mathrm{Bir}_{1,0}$; thus $X$ is not even $C_3$-equivariantly birational to $\bP^4$.

\

The fourfold $X\subset \bP^5$ given by
$$
 f_3(x_0,x_1,x_2)+ x_3^2x_4+x_4^2x_5+x_5^2f_1(x_0,x_1,x_2)=0
$$
carries the action of $G=C_8$, with weights
$$
(0,0,0,1, 6, 4).
$$
The fourfold is smooth, e.g., for $f_3=x_0^3+x_1^3+x_2^3$ and $f_1=x_0$. 

Here, there are no isolated fixed points, but we find 
information from fixed point loci in higher dimension.   
The $G$-fixed locus contains the degree 3 curve $Y$ given by
$$
 x_3=x_4=x_5=f_3(x_0,x_1,x_2)=0,
 $$
which is smooth for appropriate $f_3$. 
Thus we get a contribution to 
$\beta_k(X)\in \bCB_{4}(G,k)$, in the summand labeled by 
$Y\in \mathrm{Bir}_{3,2}$:
$$
[7,2,4] \neq 0 \in \bCB_3(C_8)\cong \mathbb F_2.
$$ 
Here, we solve 289 linear equations in 120 variables.  
This implies that $X$ is not $G$-equivariantly birational to $\bP^4$. Of course, 
this also follows by observing that the fourth power of the generator fixes a cubic threefold.

\

\noindent
{\bf Using $\mathrm{Burn}_{n}(G)$:}
Consider $X\subset \bP^5$ given by
\begin{equation} \label{C6cubic}
x_0x_1^2+x_0^2x_2-x_0x_2^2-4x_0x_4^2+x_1^2x_2+x_3^2x_5-x_2x_4^2-x_5^3=0.
\end{equation}
It carries the action of $G=C_6$, which acts with weights
\[ (0,0,0,1,3,4). \]
The cubic fourfold $X$ is rational,
since it contains the disjoint planes
$$
x_0=x_1-x_4=x_3-x_5=0\quad \text{ and } \quad 
x_2=x_1-2x_4=x_3+x_5=0.
$$
Noticing a cubic surface $S\subset X$, with $C_3$-stabilizer and
scalar action on the normal bundle, the fact that the $C_2$-action on $S$
fixes an elliptic curve on it lets us conclude, by \cite{BogPro}, 
that the cubic surface is not stably $C_2$-equivariantly rational; the corresponding symbol
$$
[ C_3, C_2\actsfromleft k(S),\beta] \neq 0 \in \Burn_4(C_6),
$$
moreover, it does not interact with any other symbols in 
$[X\actsfromright G]$, which implies that $X$ is not $G$-birational to $\bP^4$ with linear action.  
In this case, no subgroup of $C_6$ fixes a hyperplane section. 

We discuss obstructions of such type in Section~\ref{sec.elementary} below -- formally, 
$\Burn_4(C_6)$ admits a projection to $\bZ$ that distinguishes
the equivariant birational class of $X$ from that of
$\bP^4$ with linear action.

\

Kuznetsov \cite{Kuz} conjectures which cubic fourfolds $X$
are rational, in terms of their derived categories
$\mathsf D^b(X)$.
Consider the line bundles $\bCO_X,\bCO_X(1),\bCO_X(2)$ and the right
orthogonal complement
\begin{equation} \label{derived}
\bCA_X = \left< \bCO_X,\bCO_X(1),\bCO_X(2)\right>^{\perp}.
\end{equation}
Conjecturally, $X$ is rational if and only if $\bCA_X \cong \mathsf D^b(S)$,
the bounded derived category of a K3 surface $S$.
However, the K3 surface need not be canonically determined as there
are many examples of derived-equivalent but nonisomorphic K3 surfaces.
Indeed the following conditions on complex
projective K3 surfaces $S_1$ and $S_2$
are equivalent \cite{Orlov}
\begin{itemize}
\item{$\mathsf D^b(S_1)\cong \mathsf D^b(S_2)$;}
\item{the Mukai lattices
$$\widetilde{H}(S_1,\bZ) \cong \widetilde{H}(S_2,\bZ)$$
as Hodge structures;}
\item{the transcendental cohomology lattices 
$$H^2_{\tran}(S_1,\bZ) \cong H^2_{\tran}(S_2,\bZ)$$
as Hodge structures.}
\end{itemize}

There is an alternative Hodge-theoretic version of the conjecture:
A smooth cubic fourfold $X$ is rational if and only if there exists
a K3 surface $S$ and an isomorphism of integral Hodge structures
\begin{equation} \label{hodge}
H^4(X,\bZ)_{\tran}\cong H^2(S,\bZ)_{\tran}(-1),
\end{equation}
where $\tran$ denotes the orthogonal complement
of the Hodge classes and $(-1)$ designates the Tate twist.
Work of Addington and Thomas \cite{AdTh}, and recent extensions
\cite[Cor.~1.7]{BLMNPS}, show that the conditions (\ref{derived})
and (\ref{hodge}) are equivalent. In particular, both
are stable under specialization, consisting of an explicit countable
collection of divisors in moduli \cite{Has00}.
The main theorem of \cite{KT} -- that rationality is stable under
specializations of smooth projective varieties -- 
gives the equivalence of Kuznetsov's conjecture with 
the Hodge-theoretic statement. 

Suppose then that $X$ admits an action of a finite group $G$.
If $X$ is rational -- and the conjectures are true -- then
$G$ naturally acts on $\bCA_X$ and $\mathsf D^b(S)$,
for each surface $S$ arising in (\ref{derived}). 
There is an induced action on $\widetilde{H}^*(S,\bZ)$ as well.
It is natural to speculate that a $G$-equivariant birational
map $\bP^4 \stackrel{\sim}{\dashrightarrow} X$ should imply that 
we may choose $S$ in its derived equivalence class so that 
the $G$-action on the Mukai lattice is induced by a $G$-action on
$S$. 

There are several possible obstructions to finding such an $S$:
\begin{itemize}
\item{if $S$ exists then there exists a sublattice of algebraic
classes 
$$U := \left( \begin{matrix} 0 & 1 \\
			    1 & 0 \end{matrix} \right)=H^2(S,\bZ)^{\perp}$$
in the $G$-invariant part of the abstract
Mukai lattice arising from $\bCA_X$;}
\item{the action of $G$ on $\Pic(S)$ preserves the ample cone
of $S$.}
\end{itemize}
The first condition fails when $G$ permutes various derived equivalent 
K3 surfaces. The second condition fails if $G$ includes a Picard-Lefschetz
transformation associated with a smooth rational curve $\bP^1 \subset S$.
Derived equivalent K3 surfaces might have very different 
automorphism groups \cite[Ex.~23]{HT17}; this paper discusses
descent of derived equivalence in the presence of Galois actions.

We mention some results on when the group action can be
lifted to the associated K3 surface \cite[\S 8]{Ouchi}:
\begin{itemize}
\item{if $G\neq \{1\}, G$ acts on $X$ symplectically, i.e., acts trivially
on $\mathrm H^1(\Omega^3_X)$, then $S$ is unique;}
\item{if $X$ is the Klein cubic fourfold
$$
x_0^3 + x_1^2 x_2 + x^2_2 x_3 + x^2_3 x_4 + x^2_4 x_5 + x^2_5 x_1 = 0
$$
then $X$ admits a symplectic automorphism of order $11$ and $\bCA_X\cong \mathsf D^b(S)$
for a unique K3 surface, which has no automorphism of order $11$.}
\end{itemize}
We speculate that the Klein example should not be $C_{11}$-equivariantly
rational, even though
$$
\beta(X\actsfromright C_{11})=0 \in \bCB_4(C_{11}),
$$
as the $C_{11}$-action
has isolated fixed points and the target group is trivial
\cite[\S 8]{kontsevichpestuntschinkel}. 

\begin{ques}
Let $X$ be a smooth cubic fourfold with the action of a (finite) group $G$. 
Suppose that $\bCA_X \cong \mathsf D^b(S)$ for a K3 surface $S$ with $G$-action,
compatible with the isomorphism. Does it follow that
$$
[X\actsfromright G]=[\bP^4\actsfromright G] \in \Burn_4(G),
$$
for some action of $G$ on $\bP^4$? 
\end{ques}

It is mysterious how the invariants in the Burnside groups interact with
the actions on the Hodge structures on the middle cohomology of $X$. 
Obstructions to $G$-equivariant rationality arise from fixed loci in
various dimensions but the Hodge theory encodes codimension-two
cycles only. The example \eqref{C6cubic}, which is rational but not
$C_6$-rational, is particularly striking to us: How is
the cubic surface in the fixed locus coupled with the associated K3
surfaces?

\section{Nonabelian invariants}
\label{sect:nonab}

In this section, $G$ is a finite group, not necessarily abelian. 

\subsection{The equivariant Burnside group}
As in Section~\ref{subsect:burndef}, it is defined as the quotient of the $\bZ$-module generated by 
symbols
$$
(H, N_G(H)/H\actsfromleft K, \beta),
$$
similar to those in \eqref{eqn:symbol}, by {\bf blow-up relations}. The required relations are a bit
complicated but similar in spirit to what was written above; 
precise definitions are in \cite[Section 4]{kreschtschinkel}. 
The resulting group 
$$
\Burn_n(G)
$$
carries a rich combinatorial structure, that remains largely unexplored.

\subsection{Resolution of singularities} 
\label{subsect:resolve}
The class of $X\actsfromright G$, a projective variety with a generically free $G$-action, 
is computed on a suitable model 
of the function field $k(X)$. We explain how such a model may be found in practice.
While this is a corollary of Bergh's `destackification' procedure \cite{bergh},
the approach here can be helpful in specific examples.

We first review the resolution process of \cite[\S 3]{reichsteinyoussinessential}.
A variety with group action as above is {\em in standard form with respect
to a $G$-invariant divisor $Y$} if
\begin{itemize}
\item{$X$ is smooth and $Y$ is a simple normal crossings divisor;}
\item{the $G$ action on $X\setminus Y$ is free;}
\item{for every $g\in G$ and irreducible component $Z\subset Y$ either
$g(Z)=Z$ or $g(Z)\cap Z=\emptyset$.}
\end{itemize}
We recall several fundamental results. First, we can always
put actions in standard form:
\begin{quote}
If $X$ is smooth and $Y$ is a $G$-invariant closed subset
such that $G$ acts freely on $X\setminus Y$ then there exists a 
resolution
$$\pi:\widetilde{X} \ra X$$
obtained as a sequence of blowups along smooth $G$-invariant
centers, such that
$\widetilde{X}$ is in standard form with respect to 
$\operatorname{Exc}(\pi)\cup \pi^{-1}(Y)$ \cite[Th.~3.2]{reichsteinyoussinessential}.
\end{quote}

An action in standard form has stabilizers of special type:
\begin{quote}
Assume that $X$ is in standard form with respect to $Y$
and $x\in X$ lies on $m$ irreducible components of $Y$
then the stabilizer $H$ of $x$ is abelian with $\le m$ generators 
\cite[Th.~4.1]{reichsteinyoussinessential}.
\end{quote}
The proof of \cite[Th.~4.1]{reichsteinyoussinessential} (see Remark~4.4) 
yields \'etale-local coordinates on $X$ about $x$
$$x_1,\ldots,x_k,y_1,\ldots,y_l,z_1,\ldots,z_m$$
such that
\begin{itemize}
\item{$H$ acts diagonally on all the coordinates;}
\item{$y_1=\cdots=y_l=z_1=\cdots=z_m=0$ coincides with $X^H$, i.e., these are the coordinates on which $H$ acts nontrivially;}
\item{$y_1\cdots y_m=0$ coincides with $Y$ and the associated
characters of $\chi_i:H\ra \bG_m$ generate $\Hom(H,\bG_m)$ so
the induced representation
\begin{equation}
(\chi_1,\ldots,\chi_m):H \hookrightarrow  \bG_m^m \label{inject}
\end{equation}
is injective.}
\end{itemize}

Suppose that $X$ is in standard form with respect to $Y$
with irreducible components $Y_1,\ldots,Y_s$. For each orbit
of these under the action of $G$, consider the reduced divisor
obtained by summing over the orbit. The resulting divisors
$D(1),\ldots,D(r)$ have smooth support -- by the definition of 
standard form -- and
$$D(1) \cup \cdots \cup D(r) = Y_1 \cup \cdots \cup Y_s.$$
The line bundles $\bCO_X(D(i))$ are naturally $G$-linearized and thus
descend to line bundles on the quotient stack $[X/G]$ and we obtain
$$\varphi: [X/G] \rightarrow B\bG_m \times \cdots \times B\bG_m \quad (r \text{ factors}).$$

We claim $\varphi$ is representable. It suffices to check this by showing
that the induced homomorphism of stabilizers is injective at each point
\cite[\href{https://stacks.math.columbia.edu/tag/04YY}{Tag 04YY}]{stacks-project}.
For $x\in X$, fix the indices $i_1,\ldots,i_m$ so that 
$D(i_1),\ldots,D(i_m)$ are the components of $Y$ containing $x$,
and consider the induced
$$\varphi_x: [X/G] \rightarrow B\bG_m \times \cdots \times B\bG_m \quad (m \text{ factors}).$$
The homomomorphism on stabilizers is given by (\ref{inject}) which
is injective.

Thus we have established:
\begin{prop}
Let $X$ be a smooth projective variety with a generically free action by a 
finite group $G$, in standard form with respect to a divisor $Y$.
Then Assumption 2 of \cite{kreschtschinkel} holds and the invariants
constructed there may be evaluated on $X$.
\end{prop}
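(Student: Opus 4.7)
The plan is essentially to assemble the discussion immediately preceding the statement into a proof, verifying that the representability of the morphism $\varphi\colon [X/G]\to (B\bG_m)^r$ constructed from the orbits of components of $Y$ is precisely what Assumption~2 of \cite{kreschtschinkel} demands.

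First I would list the data explicitly. Let $Y_1,\dots,Y_s$ be the irreducible components of $Y$, and let $D(1),\dots,D(r)$ be the reduced divisors obtained by summing over $G$-orbits of the $Y_i$. Because $X$ is in standard form, distinct components in a single orbit are disjoint, so each $D(i)$ has smooth support. The line bundle $\bCO_X(D(i))$ carries a canonical $G$-linearization coming from the $G$-action permuting the summands, hence descends to a line bundle $\mathcal{L}_i$ on $[X/G]$. Taking the associated $\bG_m$-torsors produces the morphism
\[
\varphi\colon [X/G]\longrightarrow (B\bG_m)^r.
\]

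The main step is to verify that $\varphi$ is representable, and this is where I would appeal to the étale-local structure theorem of Reichstein--Youssin. Given $x\in X$ with stabilizer $H$, choose étale-local coordinates $(x_\bullet,y_\bullet,z_\bullet)$ as in Theorem~4.1 (and Remark~4.4) of \cite{reichsteinyoussinessential}, so that $H$ acts diagonally, the divisors of $Y$ through $x$ are cut out by the coordinates $y_1,\dots,y_m$, and the characters $\chi_j$ by which $H$ acts on $y_j$ assemble into an injection $(\chi_1,\dots,\chi_m)\colon H\hookrightarrow \bG_m^m$. Because the components $Y_{i_1},\dots,Y_{i_m}$ through $x$ all lie in distinct $G$-orbits (orbits in $Y$ consist of disjoint components by standard form), the characters $\chi_j$ are exactly the characters by which $H$ acts on the $\mathcal{L}_{i_j}$ at $x$. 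By the criterion \cite[\href{https://stacks.math.columbia.edu/tag/04YY}{Tag 04YY}]{stacks-project}, representability of $\varphi$ is equivalent to injectivity on automorphism groups at every point, and this is exactly the injectivity of $(\chi_1,\dots,\chi_m)$.

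Having established representability, the last step is to identify this condition with Assumption~2 of \cite{kreschtschinkel}. In that paper the invariants $[X\actsfromright G]\in \Burn_n(G)$ are defined on models where $[X/G]$ admits such a representable map to a product of $B\bG_m$'s indexed by the ramification divisors, so that the stabilizers at every point are abelian, embed diagonally into a torus, and the blow-up relations among symbols $(H,N_G(H)/H\actsfromleft K,\beta)$ can be applied componentwise. The representability just verified, together with the fact (also from the Reichstein--Youssin structure result) that the stabilizers are abelian with the required number of generators, is exactly this hypothesis. The main obstacle, insofar as there is one, is the bookkeeping needed to match the local characters $\chi_j$ with the pullbacks of the tautological characters on the factors of $(B\bG_m)^r$; this follows from the standard-form condition that each $G$-orbit of components is a disjoint union, so that at $x$ only one summand of $D(i_j)$ passes through $x$ and contributes $\chi_j$.
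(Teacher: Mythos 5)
Your proposal is correct and follows essentially the same route as the paper: form the reduced divisors $D(1),\dots,D(r)$ from $G$-orbits of components of $Y$, descend the linearized line bundles $\bCO_X(D(i))$ to $[X/G]$ to get $\varphi\colon [X/G]\to (B\bG_m)^r$, and verify representability via \cite[\href{https://stacks.math.columbia.edu/tag/04YY}{Tag 04YY}]{stacks-project} by identifying the stabilizer homomorphism at $x$ with the injection $(\chi_1,\dots,\chi_m)\colon H\hookrightarrow\bG_m^m$ supplied by the Reichstein--Youssin local coordinates. Your added remark that the standard-form disjointness of components within an orbit is what guarantees each $D(i_j)$ contributes exactly the character $\chi_j$ at $x$ is a correct and slightly more explicit justification of the step the paper states without elaboration.
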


\subsection{The class of $X\actsfromright G$}
On a suitable model $X$, 
we consider each stratum $F\subset X$ with nontrivial (abelian)
stabilizer $H\subset G$, 
the action of the normalizer $N_G(H)/H$ on (the orbit of) the stratum, and the
induced action of $H$ on its normal bundle, and record these in a symbol. Then we 
define
\begin{equation}
\label{eqn:defn-xg}
[X\actsfromright G]:=\sum_{H\subseteq G} \sum_F(H,N_G(H)/H\actsfromleft k(F), \beta) \in \Burn_n(G), 
\end{equation}
as \eqref{eqn:xg}. The proof that this is a $G$-birational invariant relies on $G$-equivariant Weak Factorization and combinatorics \cite[Section 5]{kreschtschinkel}.

\subsection{Elementary observations}
\label{sec.elementary}
As we discussed, the presence of higher genus curves in the fixed locus of the
action of a cyclic group of prime order on a rational surface is an
important invariant in the study of the plane Cremona group;
see, e.g., \cite{blancsubgroups}.
These make up entirely the group $\Burn_2^{\bZ/2\bZ}(\bZ/2\bZ)$ and entirely
characterize birational involutions of the plane
up to conjugation \cite{baylebeauville}.

More generally, for any nontrivial cyclic subgroup $H$ of $G$ and birational class of an
$(n-1)$-dimensional variety $Y$, not birational to
$Z\times \bP^1$ for any variety $Z$ of dimension $n-2$,
we have a projection from $\Burn_n(G)$ onto
the free abelian group on the $N_G(H)$-conjugacy classes of pairs
$(H',a)$, where $H'$ is a subgroup,
$H\subset H'\subset N_G(H)$, and $a\in H^\vee$ is a primitive character.
This sends 
$$
(H,\Ind_{H'/H}^{N_G(H)/H}(k(Y)),a),
$$ 
for any
$H'/H\actsfromleft k(Y)$, to the
generator indexed by the conjugacy class $[(H',a)]$ of the pair.

A more refined version of this observation might also relax the
restriction on $Y$ but take into account the action $H'/H\actsfromleft k(Y)$.
We do not go into details, but only point out, for instance, that for $n=2$
and $Y=\bP^1$
there is a projection
\[ \Burn_2(G)\to \bigoplus_{\substack{[(H',a)]\\ H'/H \text{ not cyclic}}} \bZ. \]
Taking $G$ to be the dihedral group of order $12$ and $H$ the center of $G$,
we may distinguish between the two inclusions of $G$ into the
plane Cremona group considered in \cite{isk-s3}, see Section~\ref{subsect:isk} below.

\subsection{Dihedral group of order 12.} 
\label{subsect:relations}
We now compute $\Burn_2(G)$, for $G=D_{6}$, 
the dihedral group with generators 
$$\rho, \quad \sigma,\quad \text{  with } \quad 
\rho^6=\sigma^2=\rho\sigma\rho\sigma=e_{D_6}.
$$
We list abelian subgroups, up to conjugacy:
\begin{itemize}
\item order $6$: $C_6=\langle \rho\rangle$
\item order $4$: $D_2=\langle \rho^3,\sigma\rangle$
\item order $3$: $C_3=\langle \rho^2\rangle$
\item order $2$: central $C_2=\langle \rho^3\rangle$, noncentral
$S:=\langle \sigma\rangle$, $S':=\langle \rho^3\sigma\rangle$
\item order $1$: $\mathrm{triv}$
\end{itemize}
The subgroup of order $4$ and two noncentral subgroups of order $2$ have
normalizer $D_2$, the others are normal.

As before, we use $k(X)$ to denote the function field of the underlying
surface $X$, $K$ to denote the algebra of rational functions of a one-dimensional
stratum with nontrivial stabilizer,
and $k^n$ to denote the algebra of functions of a zero-dimensional
orbit of length $n$.
When we blow up such an orbit, we use $k^n(t)$ to denote the total ring
of fractions of the exceptional locus.

\

\noindent
{\bf Generators:}

\noindent
$(C_6,\langle\bar\sigma\rangle \actsfromleft K,(1))$

\noindent
$(C_6,\langle\bar\sigma\rangle \actsfromleft k^2,(1,j))$, \quad $j=1,\ldots, 4$,  \quad

\noindent
$(C_6,\langle\bar\sigma\rangle \actsfromleft k^2,(2,3))$

\noindent
$(D_2,\mathrm{triv}\actsfromleft k,(a_1,a_2))$, \quad $a_1,a_2\in \bF_2^2$, generating $\bF_2^2$

\noindent
$(C_3,\langle \bar\rho,\bar\sigma\rangle\actsfromleft K,(1))$

\noindent
$(C_3,\langle \bar\rho,\bar\sigma\rangle\actsfromleft k^4,(1,1))$

\noindent
$(C_2,\langle \bar\rho,\bar\sigma\rangle\actsfromleft K,(1))$

\noindent
$(S,C_2\actsfromleft K,(1))$

\noindent
$(S',C_2\actsfromleft K,(1))$

\noindent
$(\mathrm{triv},D_6\actsfromleft k(X),\mathrm{triv})$

\

\noindent
{\bf Relations:}

\noindent
$(C_6,\langle\bar\sigma\rangle \actsfromleft k^2,(1,1))=(C_6,\langle \bar\sigma\rangle \actsfromleft k^2(t),(1))$

\noindent
$(C_6,\langle\bar\sigma\rangle \actsfromleft k^2,(1,2))=(C_6,\langle\bar\sigma\rangle \actsfromleft k^2,(1,1))+(C_6,\langle\bar\sigma\rangle \actsfromleft k^2,(1,4))$

\noindent
\begin{multline*}
\hskip -0.35cm
(C_6,\langle\bar\sigma\rangle \actsfromleft k^2,(1,3)) =(C_6,\langle\bar\sigma\rangle \actsfromleft k^2,(1,2))+(C_6,\langle\bar\sigma\rangle \actsfromleft k^2,(2,3))+ \\
(C_2,\langle \bar\rho,\bar\sigma\rangle \actsfromleft k^2(t),(1)),
\end{multline*}
 where $\bar\rho$ acts by cube roots of unity on $t$

\noindent
\begin{multline*}
\hskip -0.35cm
(C_6,\langle\bar\sigma\rangle \actsfromleft k^2,(1,4))=(C_6,\langle\bar\sigma\rangle \actsfromleft k^2,(1,3))+(C_6,\langle\bar\sigma\rangle \actsfromleft k^2,(2,3))+ \\
(C_3,\langle \bar\rho,\bar\sigma\rangle \actsfromleft k^2(t),(1)), 
\end{multline*}
 where $\bar\rho$ acts by $-1$ on $t$
 
 \

\noindent
$0=2(C_6,\langle\bar\sigma\rangle \actsfromleft k^2,(1,4))+(C_2,\langle \bar\rho,\bar\sigma\rangle \actsfromleft k^2(t),(1))$, \\
where $\bar\rho$ acts by cube roots of unity on $t$

\

\noindent
$(C_6,\langle\bar\sigma\rangle \actsfromleft k^2,(2,3))=(C_6,\langle\bar\sigma\rangle \actsfromleft k^2,(1,2))+(C_6,\langle\bar\sigma\rangle \actsfromleft k^2,(1,3))$

\

\noindent
\begin{multline*}
\hskip -0.35cm
(D_2,\mathrm{triv}\actsfromleft k,((1,0),(0,1)))= 
(D_2,\mathrm{triv}\actsfromleft k,((1,0),(1,1)))+ \\ (D_2,\mathrm{triv}\actsfromleft k,((0,1),(1,1)))+ (S',C_2\actsfromleft k(t),(1))
\end{multline*}

\noindent
\begin{multline*}
\hskip -0.35cm
(D_2,\mathrm{triv}\actsfromleft k,((1,0),(1,1)))=(D_2,\mathrm{triv}\actsfromleft k,((1,0),(0,1)))+ \\(D_2,\mathrm{triv}\actsfromleft k,((0,1),(1,1)))+(C_2,\langle \bar\rho,\bar\sigma\rangle \actsfromleft k^3(t),(1)), 
\end{multline*}
 permutation action on $k^3$ with $\bar\sigma$ acting by $-1$ on $t$

\

\noindent
\begin{multline*}
\hskip -0.35cm
(D_2,\mathrm{triv}\actsfromleft k,((0,1),(1,1)))=(D_2,\mathrm{triv}\actsfromleft k,((1,0),(0,1)))+ \\(D_2,\mathrm{triv}\actsfromleft k,((1,0),(1,1)))+(S,C_2\actsfromleft k(t),(1))
\end{multline*}

\

\noindent
$(C_3,\langle \bar\rho,\bar\sigma\rangle\actsfromleft k^4,(1,1))=(C_3,\langle \bar\rho,\bar\sigma\rangle\actsfromleft k^4(t),(1))$

\

\noindent
$0=2(C_3,\langle \bar\rho,\bar\sigma\rangle\actsfromleft k^4,(1,1))$

\noindent
$0=(C_2,\langle \bar\rho,\bar\sigma\rangle\actsfromleft k^6(t),(1))$

\

\noindent
$0=(S,C_2\actsfromleft k^2(t),(1))$

\noindent
$0=(S',C_2\actsfromleft k^2(t),(1))$

\subsection{Embeddings of $\fS_3\times C_2$ into the Cremona group}
\label{subsect:isk}

Iskovskikh \cite{isk-s3} exhibited two nonconjugate
copies of $G=\fS_3\times C_2\cong D_6$
in $\BirAut(\bP^2)$: 
\begin{itemize}
\item{the action on $x_1+x_2+x_3=0$ by permutation and reversing signs,
with model $\bP^2$;}
\item{the action on $y_1y_2y_3=1$ by permutation and taking inverses, 
with model a sextic del Pezzo surface.}
\end{itemize}
To justify the interest in these particular actions we observe that $G$ is the Weyl group of 
the exceptional Lie group $\mathsf G_2$, 
which acts on the Lie algebra of the torus, respectively on the torus itself, and it is natural to ask whether or not 
these actions are equivariantly birational.  
It turns out that they are stably $G$-birational \cite[Proposition 9.11]{lemire}, but not $G$-birational. 
The proof of failure of $G$-birationality in \cite{isk-s3} relies on the classification of links, via the $G$-equivariant Sarkisov program. 

Here we explain how to apply  $\Burn_2(G)$ to this problem. 
Note that neither model above satisfies the stabilizer condition required in the Definition \eqref{eqn:defn-xg}! 
We need to replace the surfaces by appropriate models $X$ and $Y$, in particular, 
to blow up points:
\begin{itemize}
\item{$(x_1,x_2,x_3)=(0,0,0)$, with $G$ as stabilizer;}
\item{$(y_1,y_2,y_3)=(1,1,1)$, with $G$ as stabilizer, and 
$$
(\omega,\omega,\omega), \quad (\omega^2,\omega^2,\omega^2), \quad 
\omega=e^{2\pi i/3},
$$ 
with $\fS_3$ as stabilizer.}
\end{itemize}

We describe these actions in more detail, following closely \cite{isk-s3}. 
The action on $\bP^2$, with coordinates $(u_0:u_1:u_2)$ is given by 
$$
\begin{pmatrix}
1 &  0 &  0  \\ 0 &  0 & 1 \\ 0 &  1 & 0 
\end{pmatrix}, 
\quad 
\begin{pmatrix}
1 & 0 &  0 \\ 0 &  0 & 1 \\ 0 &  -1  & -1 
\end{pmatrix},
\quad 
\rho^3=
\begin{pmatrix}
1 & 0 & 0 \\ 0 &  -1 &  0 \\ 0 &  0 & -1 
\end{pmatrix}.
$$
There is one fixed point, $(1:0:0)$; after blowing up this point, the exceptional curve is stabilized by the 
central involution $\rho^3$, and comes with a nontrivial $\fS_3$-action, contributing the symbol
\begin{equation}
\label{eqn:symbs3}
(C_2, \fS_3 \actsfromleft k(\bP^1), (1)) 
\end{equation}
to $[X\actsfromright G]$. 
Additionally, the line $\ell_0:=\{ u_0=0\}$ has as stabilizer the central $C_2$, contributing the same symbol.  
There are also other contributing terms, of the shape:
\begin{itemize}
\item $(C_6, C_2 \actsfromleft k^2, \beta)$
\item $(D_2, {\rm triv}\actsfromleft k, \beta')$
\end{itemize}
for some weights $\beta, \beta'$. 

A better model for the second action is the quadric
$$
v_0v_1+v_1v_2+v_2v_0=3w^2,
$$
where $\fS_3$ permutes the coordinates $(v_0:v_1:v_2)$ 
and the central involution exchanges the sign on $w$. 
There are no $G$-fixed points, but a conic $R_0:=\{ w=0\}$ with stabilizer the central $C_2$ and 
a nontrivial action of $\fS_3$. There are also: 
\begin{itemize}
\item a $G$-orbit of length 2: 
$$
\{ P_1:=(1:1:1:1), P_2:=(1:1:1:-1)\},
$$ 
exchanged by the central involution, each point has stabilizer $\fS_3$ -- these points have to be blown up, yielding a pair of conjugated $\bP^1$, with a nontrivial $\fS_3$-action;
\item another curve $R_1:=\{ v_0+v_1+v_2=0\}$ with effective $G$-action; 
\item additional points with stabilizers $C_6$ and $D_2$ in $R_0$ and $R_1$. 
\end{itemize}

The essential difference is that the symbol \eqref{eqn:symbs3} appears {\em twice} for the action on $\bP^2$, and only {\em once} for the action on the quadric: the pair of conjugated $\bP^1$ with $\fS_3$-action has trivial stabilizer and does not contribute. Further blow-ups will not introduce new curves of this type.
Formally, examining the relations in Section~\ref{subsect:relations}, we see that the symbol \eqref{eqn:symbs3}
is not equivalent to any combination of other symbols, i.e., it is independent of all other symbols. 
This implies that 
$$
[X\actsfromright G] \neq [Y\actsfromright G] \in \Burn_2(G),
$$
thus $X$ and $Y$ are not $G$-equivariantly birational.
Note, that $X$ and $Y$ are equivariantly birational for any proper subgroup of $G$. 

\begin{rema}
One can view 
the symbol \eqref{eqn:symbs3} 
as the analog of a curve of higher genus in the fixed locus of an element in the classification 
of abelian actions on surfaces, as discussed in Section~\ref{sect:surf}. 
\end{rema}

\bibliographystyle{plain}
\bibliography{symbolssurvey}
\end{document}